\newtheorem{theorem}{Theorem}[section]
\newtheorem{definition}[theorem]{Definition}
\newtheorem{lemma}[theorem]{Lemma}
\newtheorem{prop}[theorem]{Proposition}
\newtheorem{proposition}[theorem]{Proposition}
\newtheorem{conjecture}[theorem]{Conjecture}
\newtheorem*{remark}{Remark}
\newcommand{\abs}[1]{\lvert#1\rvert}
\renewcommand{\epsilon}{\varepsilon}
\DeclareMathAlphabet{\mathpzc}{OT1}{pzc}{m}{it}
\newcommand{\N}{\mathbb{N}}
\newcommand{\Z}{\mathbb{Z}}
\newcommand{\Q}{\mathbb{Q}}
\renewcommand{\qed}{$\hfill \square$ \smallskip \\}
\newcommand{\Spinc}{Spin^c}
\newcommand{\Hom}{\text{Hom}}
\renewcommand{\det}{\text{det}}
\newcommand{\tr}{\text{tr}}
\newcommand{\rk}{\operatorname{rk}}
\newcommand{\HF}{\widehat{\text{\em HF}}}
\newcommand{\Khr}{\text{\em Khr}}
\newcommand{\KHI}{\operatorname{KHI}}
\begin{document}
\thispagestyle{empty}
\title[On spectral sequences from Khovanov homology]{On spectral sequences from Khovanov homology}
\author{Andrew Lobb \\ Raphael Zentner}

\begin {abstract} There are a number of homological knot invariants, each satisfying an unoriented skein exact sequence, which can be realized as the limit page of a spectral sequence starting at a version of the Khovanov chain complex.  Compositions of elementary $1$-handle movie moves induce a morphism of spectral sequences.  These morphisms remain unexploited in the literature, perhaps because there is still an open question concerning the naturality of maps induced by general movies.

In this paper we focus on the spectral sequences due to Kronheimer-Mrowka from Khovanov homology to instanton knot Floer homology, and on that due to Ozsv\'ath-Szab\'o to the Heegaard-Floer homology of the branched double cover.  For example, we use the $1$-handle morphisms to give new information about the filtrations on the instanton knot Floer homology of the $(4,5)$-torus knot, determining these up to an ambiguity in a pair of degrees; to determine the Ozsv\'ath-Szab\'o spectral sequence for an infinite class of prime knots; and to show that higher differentials of both the Kronheimer-Mrowka and the Ozsv\'ath-Szab\'o spectral sequences necessarily lower the delta grading for all pretzel knots.
\end {abstract}

\address{Department of Mathematical Sciences \\ Durham University \\ Durham DH1 3LE \\ UK}
\email{andrew.lobb@durham.ac.uk}
\address {Mathematisches Institut \\ Universit\"at Regensburg \\  D-90340 Regensburg \\  Deutschland}
\email{raphael.zentner@mathematik.uni-regensburg.de}

\maketitle

\section{Introduction}
Recent work in the area of the $3$-manifold invariants called \emph{knot homologies} has illuminated the relationship between Floer-theoretic knot homologies and `quantum' knot homologies.  The relationships observed take the form of spectral sequences starting with a quantum invariant and abutting to a Floer invariant.  A primary example is due to Ozsv\'ath and Szab\'o \cite{OSb2c} in which a spectral sequence is constructed from Khovanov homology of a knot (with $\Z/2$ coefficients) to the Heegaard-Floer homology of the 3-manifold obtained as double branched cover over the knot.  A later example is due to Kronheimer and Mrowka which gives a spectral sequence \cite{KM_ss, KM_filtrations} from Khovanov homology to an instanton knot Floer homology.

There are automatically naturality questions about such spectral sequences.  Both the quantum homology and the Floer homology involved exhibit some functoriality with respect to link cobordism, and one can ask if the spectral sequences behave well with respect to this functoriality.  The project of demonstrating such naturality is important, but in this paper we are able to use the limited naturality already available (essentially naturality for cobordisms presented as a movie of elementary $1$-handle additions) to make some computations.  The basic idea is that if we are interested in the Floer homology of a knot $K$, we find a cobordism to a knot $K'$ with a simple spectral sequence and then use the quantum homology of $K'$ to draw conclusions on the Floer homology of $K$.

We are restricting ourselves to the spectral sequences of Ozsv\'ath-Szab\'o and Kronheimer-Mrowka, but the technique should have wider applicability.  In the next section we review these Floer homologies, Section 3 then deals with the spectral sequences, and Section 4 contains the computations.

A word of warning: As a matter of notational convenience, our Floer theoretic invariant of a knot or link or 3-manifold is really what in the literature would be the Floer invariant of the mirror image of a knot or link or 3-manifold.  This avoids permanent use of the word `mirror' in the spectral sequences that we study.

\subsection*{Acknowledgements}
We thank Peter Kronheimer for helpful email correspondence. We also thank CIRM where we started this project in a research in pairs program in March this year.  The first author thanks Liam Watson.
 The first author was partially supported by EPSRC grant EP/K00591X/1.

\section{Review of Heegaard-Floer and instanton Floer homology}

While Khovanov homology is very simply defined and Heegaard-Floer homology for many is a relatively comfortable object, instanton Floer homology is far less known.  Therefore we are going to assume familiarity with Khovanov homology and devote the first subsection merely to quoting a result from Heegaard-Floer, while the remaining subsections give a review of the relevant instanton Floer homology.  We will work with the reduced homology theories.

\subsection{Heegaard-Floer homology}

In this paper we are concerned with $\HF$, the `hat' version of Heegaard-Floer homology \cite{OS_first}.  This is an invariant of a closed $3$-manifold equipped with a $\Spinc$-structure and takes the form of a finitely-generated vector space over $\Z/2$.  We are interested in $3$-manifolds $\Sigma(L)$ that are obtained as branched double-covers over the mirror images of links $L \subset S^3$, and, taking the sum over all $\Spinc$-structures, we regard $\HF$ simply as a vector space.

\begin{theorem}[Ozsv\'ath-Szab\'o \cite{OSb2c}]
\label{OS_ss_thm}
Given a link $L \subset S^3$, there is a spectral sequence (which \emph{a priori} depends on a choice of link diagram) abutting to $\HF(\Sigma(L))$ with $E_1$-page equal to the reduced Khovanov chain complex and $E_2$-page equal to the reduced Khovanov homology $\Khr(L)$ (where everything has been taken with $\Z/2$ coefficients).
\end{theorem}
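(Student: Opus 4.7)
The plan is to fix a diagram $D$ of $L$ with $n$ crossings and build an $n$-dimensional cube of resolutions on the Heegaard-Floer side that matches the Khovanov cube. At each vertex $v \in \{0,1\}^n$ the complete resolution $L_v$ is an unlink on some number $k_v$ of components, whose branched double cover is $\Sigma(L_v) \cong \#^{k_v-1}(S^1 \times S^2)$. The reduced hat Heegaard-Floer homology of this connect sum with $\Z/2$ coefficients has dimension $2^{k_v-1}$, which is exactly the rank of the reduced Khovanov chain group assigned to $v$. Thus, as graded vector spaces, the direct sum $\bigoplus_v \HF(\Sigma(L_v))$ can be identified with the underlying vector space of the reduced Khovanov chain complex of $D$.

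Next I would construct the edge maps. An edge $v \to v'$ of the cube corresponds to switching a single $0$-resolution to a $1$-resolution, which on the base is a band attachment between two flat tangles; lifting to the branched double cover, $\Sigma(L_v)$ and $\Sigma(L_{v'})$ differ by surgery on a distinguished framed circle, and the third term in Ozsv\'ath-Szab\'o's surgery exact triangle corresponds to the branched double cover of the ``oriented'' resolution at that crossing. The triangle therefore supplies a canonical $\Z/2$-linear map $\HF(\Sigma(L_v)) \to \HF(\Sigma(L_{v'}))$ for every edge of the cube. Iterating this construction over all $n$ crossings produces an iterated mapping cone whose homology is $\HF(\Sigma(L))$, and filtering by the cube coordinate (the $L^1$ distance from $0 \in \{0,1\}^n$) then yields the sought spectral sequence converging to $\HF(\Sigma(L))$.

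The main technical obstacle is to verify that the edge maps assemble into an anticommutative hypercube and that the induced $E_1$ differential, under the vertex-by-vertex identifications above, coincides with the reduced Khovanov differential. This requires pinning down the ``merge'' and ``split'' maps arising from the surgery exact triangle, naturally identifying the reduced $\HF$ of $\#^{k-1}(S^1 \times S^2)$ with tensor powers of the Frobenius algebra $\Z/2[x]/x^2$ underlying Khovanov homology, and then showing that under these identifications the Heegaard-Floer cobordism map agrees with the Khovanov edge map. Once $E_1$ is identified with the reduced Khovanov chain complex, the statement $E_2 \cong \Khr(L)$ is immediate, and the abutment follows from standard convergence of the filtered iterated mapping cone. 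Note that diagram independence of the higher pages is not part of the claim, which permits the spectral sequence to depend \emph{a priori} on the choice of diagram.
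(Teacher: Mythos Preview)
The paper does not prove this theorem at all; it simply quotes it as a result of Ozsv\'ath and Szab\'o \cite{OSb2c} and uses it as input. So there is no ``paper's own proof'' to compare against, and your sketch is in effect an outline of the original Ozsv\'ath--Szab\'o argument rather than anything appearing in the present paper.

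That said, your outline is broadly faithful to the Ozsv\'ath--Szab\'o construction, with one caveat worth flagging. You describe the goal as showing the edge maps ``assemble into an anticommutative hypercube'' and then filtering. In fact anticommutativity of the square faces is not enough to produce a complex whose homology is $\HF(\Sigma(L))$; one needs the full \emph{link surgeries} machinery, in which the cube is equipped with higher diagonal maps (counting holomorphic polygons) so that the total object is a genuine filtered chain complex, and one must prove that iterating the surgery triangle in this strong sense yields $\HF(\Sigma(L))$ as the total homology. Your phrase ``iterated mapping cone'' gestures at this, but the passage from ``edge maps satisfying exact triangles'' to ``filtered complex computing $\HF(\Sigma(L))$'' is exactly where the analytic work lies, and it goes well beyond checking anticommutativity. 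Also, a minor wording issue: the third term in the surgery triangle is the branched double cover of the third link in the \emph{unoriented} skein triple, not an ``oriented'' resolution.
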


In general this theorem implies that the rank of $\HF(\Sigma(K))$ is bounded above by the rank of $\Khr(K)$.

For a knot $K$ the number of $\Spinc$ structures on $\Sigma(K)$ is equal to $|{\rm det}(K)|$, from which by an Euler characteristic argument it follows that the rank of $\HF(\Sigma(K))$ is bounded below by $|{\rm det}(K)|$, and when this bound is tight $\Sigma(K)$ is called an $L$-space.   It is a quick check that if $K$ is a knot with thin Khovanov homology then the rank of $\Khr(K)$ is exactly $|\det(K)|$ and hence the spectral sequence collapses at the $E_2$-page.

Computations of non-trivial spectral sequences for specific prime knots were given by Baldwin \cite{Baldwin}, and he observed that the spectral sequences he found had differentials that strictly decreased the $\delta$-grading on Khovanov homology.  Later in this paper we extend Baldwin's examples to an infinite class of prime knots and furthermore show that the Ozsv\'ath-Szab\'o spectral sequence has differentials which strictly decrease the $\delta$-grading for all pretzel knots.

\subsection{Instanton knot Floer homology}

Instanton knot Floer homology as constructed by Kronheimer and Mrowka \cite{KM_sutures, KM_ss, KM_filtrations} is an invariant of pairs consisting of links in 3-manifolds.  In the manifestation that interests us, we shall be restricting our attention to the case of knots and links inside the 3-sphere $K \subset S^3$.

Reduced instanton knot Floer homology $I^\natural(K)$ of a link $K$ with a marked component in the 3-sphere $S^3$ is, roughly speaking, defined via the Morse homology of a Chern-Simons functional on a space of connections that have a prescribed asymptotic holonomy around the link $K$ \cite{KM_filtrations}. It is an abelian group with an absolute $\Z/4$ grading \cite{KM_ss} (usually instanton Floer homology comes with relative $\Z/4$ gradings, but in \cite[Section 4.5 and Section 7.4]{KM_ss} absolute gradings are given). We denote by $(C(K)^\natural,d^\natural)$ the $\Z/4$ graded complex whose homology is $I^\natural(K)$. The differential $d^\natural$ lowers the $\Z/4$ grading by $1$.  Involved in the construction of this complex are various choices of perturbations one has made, but we have suppressed these in the notation as our computations will not use the definition. 
\\
\begin{remark}
	For a matter of notation we denote by $I^\natural(K)$ what Kronheimer-Mrowka denote as $I^\natural(\overline{K})$, the reduced instanton Floer homology of the mirror image of $K$. 
\end{remark}

Kronheimer and Mrowka have shown in \cite{KM_ss,KM_filtrations} that this can also be computed from the Khovanov cube as we shall now recall.

We are assuming familiarity with reduced Khovanov homology.  
Given a marked link $K$ with a diagram $D$ we shall denote the reduced Khovanov chain complex by $(C(D),d_{\Khr}(D))$ whose homology is the reduced Khovanov homology $\Khr(K)$ of $K$. The vector space $C(D)$ is a bigraded complex $(C(D)^{i,j})$, where $i$ denotes the {\em homological} and $j$ denotes the {\em quantum} grading. The differential $d_{\Khr}$ is bigraded of degree $(1,0)$. The two gradings also define a descending filtration $\mathscr{F}^{i,j} C(D)$ indexed by $\Z \oplus \Z$. With respect to this filtration a morphism $\phi$ is said to be of order $ \geq (s,t)$ if $\phi(\mathscr{F}^{i,j}C(D)) \subseteq \mathscr{F}^{i+s,j+t}C(D)$.

We follow the standard convention that gives the reduced Khovanov chain complex as a subcomplex of the Khovanov chain complex.  This has the unfortunate effect that the reduced Khovanov homology of the unknot is one copy of the ground ring (for us either $\Q$ or $\Z/2$) supported in gradings $i = 0$ and $j= -1$ (where one might think $j = 0$ more natural).  Nevertheless this brings us in line with most current usage.

The first statement of the following Theorem appears as \cite[Theorem 6.8]{KM_ss}, the second appears as \cite[Theorem 1.1]{KM_filtrations} for the unreduced versions. A corresponding statement is true for the reduced homologies. 

\begin{theorem}\label{instanton cube}(Kronheimer-Mrowka) Let $D$ be a diagram of a knot or link $K$. 
     \begin{enumerate}
     	\item[(i)] There is a differential $d_\natural(D)$ on the vector space $C(D)$ whose homology is isomorphic to the reduced instanton knot Floer homology $I^\natural(K)$. More precisely, the bigrading $(i,j)$ gives a $\Z/4$ grading on $C(D)$ by $ j-i - 1 \mod{4}$. The differential $d_\natural(D)$ lowers this $\Z/4$ grading by $1$. Then there is a quasi-isomorphism of $\Z/4$ graded chain complexes $(C(K)^\natural,d^\natural) \to (C(D),d_\natural(D))$. \\
		\item[(ii)] The difference $d_\natural(D) - d_{\Khr}(D)$ is filtered of degree $(1,2)$. 
     \end{enumerate}
\end{theorem}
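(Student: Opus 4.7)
The plan is to recapitulate the Kronheimer–Mrowka cube-of-resolutions construction and then carry out a bigrading accounting.

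For part (i), I would build $d_\natural(D)$ from the resolution cube of $D$. Each vertex $v \in \{0,1\}^n$ of the cube labels an unlink $L_v$ (planar resolution of $D$ at $v$), and Kronheimer–Mrowka's unknot/unlink computation identifies $I^\natural(L_v)$ as a vector space with the Khovanov summand $C(D)_v$; summing over vertices recovers $C(D)$. Each edge of the cube corresponds to a $1$-handle cobordism between adjacent resolutions, and the associated instanton cobordism maps are the edge components of $d_\natural$. Two-faces of the cube give null-homotopies that witness the failure of the edge maps to commute on the nose, and higher faces give higher homotopies in the usual $A_\infty$/iterated-mapping-cone sense. The key geometric input from \cite{KM_ss,KM_filtrations} is the exact triangle for $I^\natural$ under an oriented resolution change, iterated over all crossings, which shows that the total complex computes $I^\natural(K)$. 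The $\Z/4$-grading assertion follows by comparing the absolute $\Z/4$-grading on $I^\natural$ (defined by spectral flow) with $j-i-1 \pmod 4$ at each vertex and noting that $d_\natural$ lowers spectral flow by~$1$.

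For part (ii), I would decompose $d_\natural - d_{\Khr}$ into pieces indexed by pairs $(v,v')$ with $v \subseteq v'$, shifting homological grading by $p = |v'\setminus v|$. The grading constraint $q - p \equiv -1 \pmod 4$ forces $q \equiv p - 1 \pmod 4$ for each piece. I need to establish two things: first, that the $(1,0)$-piece of $d_\natural$ equals $d_{\Khr}$ exactly (cancelling in the difference), and second, that every remaining piece shifts quantum degree by $q \geq 2$. The first reduces to identifying the lowest quantum-degree part of the instanton $1$-handle map with the Frobenius merge/split map used to define $d_{\Khr}$; this is essentially the content of the vertex identification $I^\natural(L_v) \cong C(D)_v$ made equivariant with respect to the local merge/split cobordism. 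The second follows by an index calculation: on an edge ($p=1$) the mod-$4$ constraint together with non-negativity of the quantum-degree correction (from positivity of the relevant excess dimension in the instanton moduli space) yields $q \in \{0,4,8,\dots\}$, and for $p \geq 2$ the mod-$4$ constraint combined with the same positivity gives $q \geq p \geq 2$ for $p = 2,3$ and $q \geq p - 1 \geq 2$ thereafter.

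The main obstacle is the second half of part (ii): showing that no component has a negative quantum-degree contribution. This amounts to verifying that each higher cobordism map used in the construction of $d_\natural$ is quantum-degree-nonnegative, which is not a purely formal statement — it requires knowing, at the level of the instanton moduli spaces on the associated cobordisms, that the excess dimension (beyond what the $\Z/4$-grading dictates) is always nonnegative. This is precisely the filtration statement of \cite[Theorem~1.1]{KM_filtrations}, which in the unreduced setting is proved by a careful bookkeeping of holonomy perturbations and relative gradings; I would transfer that argument to the reduced setting by pairing with the marked component in the standard way. The rest of the plan — the cube construction, the $\Z/4$-grading bookkeeping, and the cancellation of the $(1,0)$-component — is then essentially algebraic.
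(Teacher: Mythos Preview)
The paper does not prove this theorem at all: it is quoted as a result of Kronheimer and Mrowka, with part (i) attributed to \cite[Theorem~6.8]{KM_ss} and part (ii) to \cite[Theorem~1.1]{KM_filtrations} (stated there in the unreduced case). There is thus no ``paper's own proof'' to compare against; your proposal is a reconstruction of the argument in those references. At the level of outline, your sketch of (i) --- resolution cube, unlink computation, iterated exact triangles, higher-face homotopies, $\Z/4$ bookkeeping --- is indeed the shape of the Kronheimer--Mrowka construction.

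Your argument for (ii), however, contains a genuine numerical gap. You assert that for a component of $d_\natural$ shifting $(i,j)$ by $(p,q)$, the congruence $q\equiv p-1\pmod 4$ together with ``the same positivity'' (which for $p=1$ you used only as $q\geq 0$) yields $q\geq p\geq 2$ when $p=2,3$ and $q\geq p-1$ thereafter. This does not follow: for $p=2$ the congruence already permits $q=1$, and for $p=5$ it permits $q=0$, both of which violate the required quantum shift $q\geq 2$. The $\Z/4$ grading combined with mere nonnegativity of $q$ is simply too weak to force order $\geq(1,2)$. The actual argument in \cite{KM_filtrations} does not proceed this way; the $(1,2)$ filtration is obtained from a direct dimension and action bound on the instanton moduli spaces contributing to the higher-face maps, and that analysis is the real content of the cited theorem rather than a corollary of the $\Z/4$ structure. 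You acknowledge as much in your final paragraph, but your middle paragraph presents the combinatorics as if it already closed the gap, and it does not.
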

As a consequence of the second point, both the homological and the quantum filtrations on the Khovanov complex induce a filtration on the instanton knot Floer homology. This is a novum of \cite{KM_filtrations} compared to \cite{KM_ss}. 
A priori these filtrations might depend on the chosen diagram, but it is not the case. In fact, Kronheimer and Mrowka have shown that the induced filtrations are invariants of the link $K$ \cite[Theorem 1.2 and Corollary 1.3]{KM_filtrations}, therefore yielding the following result:

\begin{theorem}\cite[Theorem 1.2 and Corollary 1.3]{KM_filtrations}
\label{ss}
Let $K$ be a link and let $D$ be a diagram of $K$. 
Let $a,b \geq 1$. The descending filtrations induced by $a i + b j$ on $C(D)$ is preserved by $d_\natural(D)$, and the induced filtration on $I^\natural(K)$ depends on the link $K$ only. The pages of the associated Leray spectral sequence $(E_r,d_r)$, converging to $I^\natural(K)$, are invariants of $K$ for $r \geq a+1$. There are no differentials before the $E_a$ page, and the page $E_{a+1}$ is the reduced Khovanov homology of $K$. 
\end{theorem}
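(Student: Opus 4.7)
The plan is to split the proof into two parts, with the substantive input coming from Theorem \ref{instanton cube}. The easy part is a formal spectral-sequence argument for a fixed diagram, and the harder part is the diagram-independence of the filtration on $I^\natural(K)$, which I would quote directly from \cite[Theorem 1.2 and Corollary 1.3]{KM_filtrations}.

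For the fixed-diagram part, I start by checking that $d_\natural(D)$ preserves the descending filtration $\mathscr{F}^p C(D) := \bigoplus_{ai + bj \geq p} C(D)^{i,j}$. Using Theorem \ref{instanton cube}(ii), I write $d_\natural(D) = d_{\Khr}(D) + \delta$, where $\delta$ is filtered of bidegree $\geq (1,2)$. The Khovanov differential shifts $ai+bj$ by exactly $a$, while every component of $\delta$ shifts $ai+bj$ by at least $a+2b$. Since $a,b \geq 1$, both shifts are at least $a \geq 1$, so the filtration is preserved and in fact $d_\natural(D)$ has filtered degree $\geq a$ with respect to the filtration by $ai+bj$.

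Feeding this into the standard filtered spectral-sequence formalism, the bound on the filtered degree of $d_\natural(D)$ forces $d_r = 0$ on $E_r$ for all $r < a$. On the $E_a$ page the differential $d_a$ is induced by the component of $d_\natural(D)$ of filtered degree exactly $a$, which by the above decomposition is precisely $d_{\Khr}(D)$, as every piece of $\delta$ has strictly higher filtered degree. Therefore $E_{a+1}$ is the homology of the reduced Khovanov complex, that is, $\Khr(K)$.

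The remaining, and harder, content is the diagram-independence of the filtration on $I^\natural(K)$ and consequently of the pages $E_r$ for $r \geq a+1$. This is exactly the assertion of \cite[Theorem 1.2 and Corollary 1.3]{KM_filtrations}, and I would quote it as a black box. The main obstacle --- and the heart of Kronheimer and Mrowka's argument --- is producing chain maps at the level of $(C(D),d_\natural(D))$ for each Reidemeister move that respect the $ai+bj$ filtration up to chain homotopy of the appropriate filtered degree, so that they induce isomorphisms on $E_r$-pages for $r \geq a+1$. Reproducing that argument would require returning to the moduli-space analysis underlying the instanton cube, so I would not attempt it here.
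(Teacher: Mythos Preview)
Your proposal is correct and matches the paper's approach. The paper itself does not give a detailed proof of this theorem: the filtration and its diagram-independence, as well as the invariance of the pages $E_r$ for $r \geq a+1$, are cited directly from \cite[Theorem 1.2 and Corollary 1.3]{KM_filtrations}, just as you do, and the only additional content is the last sentence, for which the paper simply remarks that it ``is not explicit in \cite{KM_filtrations} but is easily checked from Kronheimer and Mrowka's Theorem \ref{instanton cube}.'' Your filtered-degree computation (that $d_{\Khr}$ shifts $ai+bj$ by exactly $a$ while $\delta$ shifts it by at least $a+2b>a$) is precisely that easy check, spelled out in full.
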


For instance, the homological filtration induces a spectral sequence abutting to $I^\natural(K)$ whose $E_2$ page is Khovanov homology, and the quantum filtration induces a spectral sequence whose $E_1$ page is Khovanov homology.

The statement in the last sentence is not explicit in \cite{KM_filtrations} but is easily checked from Kronheimer and Mrowka's Theorem \ref{instanton cube}.

\subsection{The Alexander polynomial}
In \cite{KM_sutures} Kronheimer and Mrowka developed an instanton Floer homology of sutured manifolds, yielding a $\Z/4$ graded link homology group $\KHI(K)$ of a link $K$. In \cite{KM_alex,Lim} it is shown by Kronheimer and Mrowka, and independently by Lim, that this is related to the Alexander polynomial. In fact, $\KHI(K)$ carries two commuting operators whose common eigenspace decompositions give $\KHI(K)$ a $\Z \oplus \Z/2$ grading. For a knot, the ``graded Euler characteristic'' of $\KHI(K)$ is equal to minus the Conway-normalised Alexander polynomial $\Delta_K$:
\begin{equation}\label{alex polynomial}
	- \Delta_K(t) = \sum_{h \in \Z,i \in \Z/2} (-1)^{i} t^{h} \rk(\KHI^{i,h}(K))
\end{equation}

\begin{proposition}\label{alex polynomial lower bound} \cite{KM_ss}
	For {\em knots} $K$ Floer's excision theorem yields a natural isomorphism $I^\natural(K) \cong \KHI(K)$. As a consequence, $I^\natural(K)$ is in rank bounded below by the sum of the absolute values of the coefficients of the Alexander polynomial $\Delta_K$.
\end{proposition}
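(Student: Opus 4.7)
The statement splits into two parts, and my plan is to treat them separately.

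For the isomorphism $I^\natural(K) \cong \KHI(K)$, this is the technical heart, and I would invoke it as a known result of Kronheimer-Mrowka rather than reprove it from scratch. The underlying idea is that both groups arise as the instanton Floer homology of closed admissible pairs built from the knot complement in $S^3$: $I^\natural$ uses the ``earring'' construction attached at a basepoint of $K$, while $\KHI$ is defined by closing off a sutured-manifold model of the knot complement with an auxiliary surface of positive genus. Floer's excision theorem, applied along suitably placed incompressible tori in these closures (or their connect sums), yields a chain of isomorphisms identifying the resulting Floer groups. Carrying this matching out rigorously is the main obstacle to a self-contained argument, and it is precisely the content of the relevant sections of \cite{KM_ss}; for our purposes I would simply cite it.

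Given this isomorphism, the rank bound is a formal consequence of the Euler-characteristic formula (\ref{alex polynomial}). For each $h \in \Z$ that formula specialises to
\[
[t^h]\bigl(-\Delta_K(t)\bigr) \;=\; \rk\bigl(\KHI^{0,h}(K)\bigr) - \rk\bigl(\KHI^{1,h}(K)\bigr),
\]
so the triangle inequality gives $\bigl|[t^h]\Delta_K(t)\bigr| \leq \rk(\KHI^{0,h}(K)) + \rk(\KHI^{1,h}(K))$. Summing over $h$ and then invoking the isomorphism $\KHI(K) \cong I^\natural(K)$ yields
\[
\sum_{h} \bigl|[t^h]\Delta_K(t)\bigr| \;\leq\; \sum_{h,i} \rk\bigl(\KHI^{i,h}(K)\bigr) \;=\; \rk\bigl(\KHI(K)\bigr) \;=\; \rk\bigl(I^\natural(K)\bigr),
\]
which is the claimed lower bound.

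In summary, essentially all of the conceptual work is packaged inside the excision argument of part one, which I would quote; once that isomorphism is granted, the Alexander-polynomial bound follows from a single application of the triangle inequality to the graded Euler characteristic of $\KHI$.
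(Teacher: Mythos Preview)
Your proposal is correct. Note that the paper does not supply its own proof of this proposition: it is stated with a citation to \cite{KM_ss} and no argument, so there is nothing substantive to compare against. Your treatment matches the paper's intent in citing the excision isomorphism as a black box from \cite{KM_ss}, and you additionally spell out the easy triangle-inequality step from the graded Euler characteristic formula~(\ref{alex polynomial}) that the paper leaves implicit in the phrase ``as a consequence''.
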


\subsection{Thin Khovanov homology}
The reduced Khovanov homology of an oriented link $L$ is a bigraded vector space over the rational numbers $\Khr(L)$ which categorifies the Jones polynomial $V_L(q)$, normalised such that for the unknot $U$ one has $V_U(q)=q^{-1}$. More precisely, one has the formula
\begin{equation}\label{Khovanov and Jones}
	\sum_{i,j \in \Z} (-1)^{i} q^j \, \rk(\Khr^{i,j}(L))  = V_L(q) {\rm ,} 
\end{equation}
see for instance \cite{Khovanov_patterns}.

A link $L$ is said to have {\em thin} Khovanov homology if all non-trivial vector spaces $\Khr^{i,j}(L)$ occur on one line where $j-2i$ is constant. Kronheimer and Mrowka have shown that their spectral sequence from reduced Khovanov homology $\Khr(K)$ to reduced instanton knot Floer homology $I^\natural(K)$ has no non-trivial differential after the Khovanov page if $K$ is a quasi-alternating knot, see \cite[Corollary 1.6]{KM_ss}. Their result can easily be strengthened a little bit.

\begin{proposition} \label{knots with thin homology}
	Suppose that $K$ is a knot that has thin reduced Khovanov homology. Then the Kronheimer-Mrowka spectral sequence $\Khr(K) \Rightarrow I^\natural(K)$ has no non-zero differential (over $\Q$). The total rank of $\Khr(K)$ and $I^\natural(K)$ then agree with the determinant of $K$ given by $\abs{\Delta_K(-1)} = \abs{V_K(-1)}$. 
\end{proposition}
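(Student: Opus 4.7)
The plan is a rank-sandwich argument. Because the Kronheimer--Mrowka spectral sequence of Theorem \ref{ss} converges from $\Khr(K)$ to $I^\natural(K)$, one automatically has $\rk I^\natural(K) \leq \rk \Khr(K)$, with equality holding if and only if every differential beyond the Khovanov page vanishes. I therefore aim to show that both $\rk \Khr(K)$ and $\rk I^\natural(K)$ are equal to $\det(K)$.

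For the upper side, I would verify the standard identity $\rk \Khr(K) = \det(K)$ for thin knots. Since the support of $\Khr(K)$ lies on a single diagonal $j - 2i = c$, formula \eqref{Khovanov and Jones} factors as $V_K(q) = q^c \sum_i (-q^2)^i \rk \Khr^{i,2i+c}(K)$, and specialising to $q^2 = -1$ (equivalently $t = -1$ in the standard Jones variable) collapses all alternating signs to yield $\rk \Khr(K) = |V_K(-1)| = \det(K)$.

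For the lower side, I combine Proposition \ref{alex polynomial lower bound} with the triangle inequality: the sum of absolute values of the coefficients of $\Delta_K$ dominates $|\Delta_K(-1)| = \det(K)$, so $\rk I^\natural(K) \geq \det(K)$. Together with the spectral sequence upper bound this sandwiches $\rk I^\natural(K)$ between $\det(K)$ and $\det(K)$, forcing all three quantities to coincide and every higher differential to vanish.

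The argument is entirely numerical, so there is no serious obstacle; the only ingredient not already present in the preceding subsections is the thin identification of $\rk \Khr(K)$ with $\det(K)$, which proceeds by the same mechanism Kronheimer--Mrowka used for quasi-alternating knots in \cite[Corollary 1.6]{KM_ss}. The proposition is in this sense a mild strengthening of their result from the quasi-alternating class to the thin class.
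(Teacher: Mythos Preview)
Your argument is correct and follows essentially the same route as the paper: compute $\rk \Khr(K)=\det(K)$ from thinness via formula~\eqref{Khovanov and Jones}, then invoke Proposition~\ref{alex polynomial lower bound} to get $\rk I^\natural(K)\geq \det(K)$ and squeeze. The only cosmetic difference is that you make the triangle-inequality step $\sum |a_i|\geq |\Delta_K(-1)|$ explicit, whereas the paper leaves it implicit in the phrase ``gives the same lower bound.''
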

\begin{proof}
	Suppose the reduced Khovanov homology $\Khr^{i,j}(K)$ of $K$ is supported on the line $j = 2i + s$ for some even integer $s$. Then from formula (\ref{Khovanov and Jones}) above it follows that 
\begin{equation*} \begin{split}
	V_K(-1) & = V_K(\sqrt{-1}^2) = \sum_{i \in \Z} (-1)^i (-1)^{i + s/2} \rk(\Khr^{i,2i + s}(K)) \\ & = (-1)^{s/2} \rk(\Khr(K)) \ .
\end{split}
\end{equation*}
Therefore, the determinant is equal to the total rank of the reduced Khovanov homology of $K$. On the other hand, Proposition \ref{alex polynomial lower bound} above gives the same lower bound. As therefore the rank of reduced Khovanov homology $\Khr(K)$ and $I^\natural(K)$ have to coincide, there is no non-zero differential in the spectral sequence.
\end{proof}

An analoguous result holds in Heegaard-Floer homology: knots with thin Khovanov homology have branched double covers which are Heegaard-Floer $L$-spaces.

As a consequence of the spectral sequence, the total rank of Khovanov homology provides an upper bound for the rank of instanton homology.  In the case where these ranks agree, all the information about the filtration on instanton homology is contained in Khovanov homology.

\begin{figure}
\centerline{
{
\psfrag{K0}{$K_0$}
\psfrag{K1}{$K_1$}
\psfrag{K2}{$K_2$}
\psfrag{thing4}{$-q^n$}
\psfrag{2}{$2$}
\psfrag{x4}{$x_4$}
\psfrag{T+(D)}{$T^+(D)$}
\includegraphics[height=2in,width=3in]{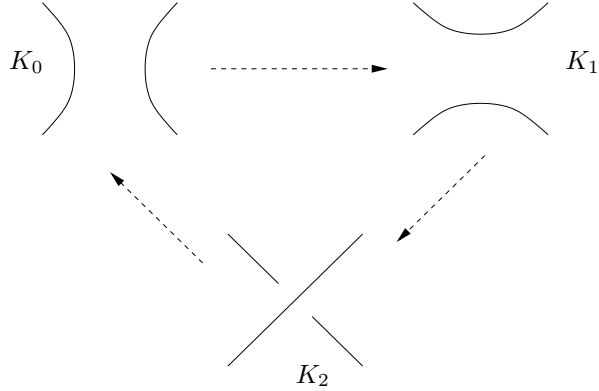}
}}
\caption{The links $K_0$, $K_1$, $K_2$ comprise an unoriented skein triple.}
\label{smooth}
\end{figure}

\subsection{Unoriented skein exact triangles}
Both Khovanov homology and instanton knot Floer homology have unoriented skein exact triangles of which we shall make extensive use in our computational section.
\begin{proposition} \label{triangle} Suppose $K_0$, $K_1$, and $K_2$ are three links with diagrams $D_0$, $D_1$, and $D_2$ respectively that look the same except near a crossing of $D_2$ where they differ as in Figure \ref{smooth}. Then there is a long exact triangle relating the groups $I^\natural(K_0)$, $I^\natural(K_1)$, and $I^\natural(K_2)$, and likewise for the reduced Khovanov homology groups $\Khr(K_0)$, $\Khr(K_1)$, and $\Khr(K_2)$:
\begin{equation*}
\begin{tikzcd} \tiny
I^\natural(K_0) \arrow{rr} & & I^\natural(K_1) \arrow{ld}  & \Khr(K_0) \arrow{rr} & & \Khr(K_1) \arrow{ld} \\
	& I^\natural(K_2) , \arrow{ul} &  & & \Khr(K_2) . \arrow{ul} &
\end{tikzcd}
\end{equation*}
All maps are induced by standard cobordisms corresponding to 1-handle attachment in both theories.
\end{proposition}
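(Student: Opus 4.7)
The plan is to derive both exact triangles uniformly from a mapping cone presentation, exploiting the fact that both $\Khr$ and $I^\natural$ admit a cube-of-resolutions construction on the same underlying vector space $C(D)$.

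First I would focus on the distinguished crossing of $D_2$ and its two smoothings $D_0$ and $D_1$. In the Khovanov setting it is standard that separating the contribution of this crossing in the cube of resolutions exhibits $(C(D_2), d_{\Khr}(D_2))$ as $\mathrm{Cone}(f_{\Khr})$ for a chain map $f_{\Khr} : C(D_0) \to C(D_1)$ (up to the standard grading shifts, which are irrelevant for exactness). By Theorem \ref{instanton cube}(i), the instanton complex $(C(D_2), d_\natural(D_2))$ lives on the same cube and inherits the analogous splitting, so it is likewise the mapping cone of a chain map $f_\natural : C(D_0) \to C(D_1)$, where $f_\natural - f_{\Khr}$ is filtered of positive degree by Theorem \ref{instanton cube}(ii).

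Second I would identify $f_{\Khr}$ and $f_\natural$ with the maps induced by the 1-handle cobordism from $K_0$ to $K_1$. For Khovanov homology this is the classical fact that the cube edge maps are precisely the saddle maps of the underlying TQFT and so realise 1-handle attachment. For instanton Floer homology, Kronheimer and Mrowka define the 1-handle cobordism map as exactly this saddle contribution in the cube of resolutions (see \cite{KM_ss}), so the identification holds by construction.

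Third, the short exact sequence of chain complexes
\[
0 \to C(D_1) \to \mathrm{Cone}(f) \to C(D_0) \to 0
\]
produces a long exact triangle whose connecting homomorphism is $f_*$. Applied in each theory, this yields the triangle of the proposition, with the map $K_0 \to K_1$ realised by the 1-handle cobordism. The other two maps, coming from the inclusion $C(D_1) \hookrightarrow C(D_2)$ and the projection $C(D_2) \twoheadrightarrow C(D_0)$, correspond to smoothing the crossing of $D_2$ in one of the two ways, which are themselves 1-handle attachments. The main obstacle is precisely this identification of all three maps with 1-handle cobordism maps: the existence of the triangle is formal from the mapping cone, but matching the arrows with the Floer- and Khovanov-theoretic cobordism morphisms requires invoking Kronheimer-Mrowka's construction of their 1-handle morphisms through the cube of resolutions and the compatibility of that construction with the Khovanov saddle maps.
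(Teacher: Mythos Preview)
The paper does not give a proof of this proposition; it is stated as a known result quoted from the literature (the instanton triangle is due to Kronheimer--Mrowka \cite{KM_ss}, the Khovanov triangle is classical). So there is no ``paper's own proof'' to compare against.

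Your sketch is a reasonable account of why the triangles exist once one takes Theorem~\ref{instanton cube} as given, and the Khovanov case is exactly as you describe. One point of caution on the instanton side: in \cite{KM_ss} the logical order is the reverse of what you suggest. The unoriented skein exact triangle for $I^\natural$ is established \emph{first}, by direct Floer-theoretic means (it is a surgery-type exact triangle, with the maps defined by counting instantons on the 1-handle cobordisms), and the cube-of-resolutions description of Theorem~\ref{instanton cube} is then built by iterating that triangle. So your statement that ``Kronheimer and Mrowka define the 1-handle cobordism map as exactly this saddle contribution in the cube of resolutions'' inverts definition and theorem: the cobordism maps are defined analytically, and their identification with the cube-edge maps is a result. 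Within the internal logic of the present paper, where Theorem~\ref{instanton cube} is taken as a black box, your mapping-cone derivation is nonetheless a valid way to recover the triangle, and your identification of the main obstacle --- matching all three arrows with the genuine cobordism-induced maps --- is accurate.
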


\subsection{Knot cobordisms and functoriality}
The instanton knot Floer homology $I^\natural(K)$ groups are functorial for knot and link cobordisms \cite{KM_ss}: Given two oriented links $K_0$ and $K_1$, and a cobordism (not necessarily orientable) $S \subseteq [0,1] \times S^3$ from $K_0 \subseteq \{ 0 \} \times S^3$ to $K_1 \subseteq \{ 1 \} \times S^3$, there is an induced morphism $I^\natural(S) : I^\natural(K_0) \to I^\natural(K_1)$ which is well defined up to an overall sign. Furthermore, the morphism induced by a composite cobordism is the composite of the morphisms of the cobordisms.

\begin{proposition}\cite[Proposition 1.5]{KM_filtrations}\label{filtrations cobordism map}
Let $S$ be a cobordism from a link $K_0$ to a link $K_1$. Let $D_0$ and $D_1$ be diagrams for $K_0$ and $K_1$. Then the map $I^\natural(S) : I^\natural(K_0) \to I^\natural(K_1)$ is induced by a chain map $c: C(D_0) \to C(D_1)$ which has order 
\begin{equation*}
	\geq \left( \frac{1}{2} (S \cdot S) , \chi(S) + \frac{3}{2} (S \cdot S)\right) \ ,
\end{equation*}
where $\chi(S)$ denotes the Euler characterstic of $S$, and $S \cdot S$ denotes the self-intersection number of $S$ with the boundary condition that a push-off at the ends is required to have linking number $0$ with $K_0$ respectively $K_1$.
\end{proposition}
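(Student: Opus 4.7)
The plan is to decompose $S$ into a movie of link diagrams and sum the filtration shifts contributed by each elementary move, following the approach of Kronheimer-Mrowka. Choose a generic Morse function on $S$ whose height is given by the $[0,1]$ factor of $[0,1] \times S^3$. At regular values we obtain link diagrams $D_t$, and at critical values we have $0$-, $1$-, or $2$-handle attachments; between critical levels the diagrams are related by planar isotopies and Reidemeister moves. When $S$ is not in general position as an embedded surface, perturb to an immersion with transverse double points whose signed count (with the prescribed $0$-linking boundary framing) equals $S \cdot S$; equivalently, model the non-orientability by cross-cap moves inserted into the movie.

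First, verify the bound in the orientable case, where $S \cdot S = 0$ and the target order reads $(0, \chi(S))$. The induced chain map for each elementary move is read off the reduced Khovanov TQFT: Reidemeister moves and planar isotopies give chain-homotopy equivalences of bidegree $(0,0)$; a $0$-handle (birth of an unknot) or $2$-handle (death of an unknot) gives bidegree $(0,+1)$, matching its Euler-characteristic contribution of $+1$; a $1$-handle (saddle) gives bidegree $(0,-1)$, matching its contribution of $-1$. Summing over the critical points yields total bidegree $(0, \chi(S))$. That we are working with $d_\natural$ rather than $d_{\Khr}$ does not affect this step, since by Theorem \ref{instanton cube} their difference is filtered of order $(1,2)$, and so the filtration shift of each elementary chain map is controlled by its underlying Khovanov bidegree.

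Next, account for the non-orientable contribution. A single cross-cap move can be analysed locally by a small modification of the saddle chain map; the bidegree of the induced chain map is computed directly from the local model. Each unit of self-intersection contributes an additional $(\tfrac{1}{2}, \tfrac{3}{2})$ to the bidegree, and summing over the $S \cdot S$ double points and adding to the orientable contribution yields the stated bound of $\geq (\tfrac{1}{2}(S \cdot S),\, \chi(S) + \tfrac{3}{2}(S \cdot S))$. Independence of the movie presentation, and hence well-definedness of the filtration shift, follows from the invariance of the filtered homotopy type stated in Theorem \ref{ss}.

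The main obstacle is the local computation at a cross-cap together with the verification that $S \cdot S$, defined with the $0$-linking boundary framing, aggregates correctly with the Euler characteristic under composition of cobordisms. The orientable case is essentially the classical Khovanov cobordism bidegree computation; the novelty is handling the non-orientable shift, which requires the instanton-theoretic framing convention to give a well-defined integer $S \cdot S$ and matching the stated half-integer shifts on the homological filtration.
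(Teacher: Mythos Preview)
The paper does not prove this proposition at all: it is quoted verbatim as \cite[Proposition 1.5]{KM_filtrations} and no argument is given. Later, in the sketch of Proposition~\ref{cobordism theorem}, the authors explicitly refer the reader to ``the Proof of Proposition~\ref{filtrations cobordism map} \ldots\ in the original source \cite{KM_filtrations}''. So there is nothing in this paper to compare your proposal against.

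As a sketch of the Kronheimer--Mrowka argument your outline is in the right spirit (movie decomposition, summing elementary shifts), but a couple of points are muddled. First, $S\cdot S$ is defined for the \emph{embedded} surface $S$ via the $0$-linking push-off; it is not a count of immersed double points after perturbation, and conflating non-orientability with self-intersection of an immersion is not correct here. Second, your assertion that ``each unit of self-intersection contributes an additional $(\tfrac{1}{2},\tfrac{3}{2})$'' is not something one checks by a local cross-cap computation in isolation: the filtration is integer-valued, and a single cross-cap does not make sense as an elementary movie move with a well-defined half-integer bidegree. In \cite{KM_filtrations} the non-orientable contribution is handled by factoring through Reidemeister~I type moves (which change writhe and hence shift the bigrading of $C(D)$), so that the net shift over a movie realising a non-orientable $S$ accumulates to the stated integers $\tfrac{1}{2}(S\cdot S)$ and $\tfrac{3}{2}(S\cdot S)$; you have asserted the answer without indicating the mechanism. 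Finally, your claim that the orientable bidegree computation ``is not affected'' by working with $d_\natural$ rather than $d_{\Khr}$ needs more: one must check that the elementary chain maps themselves (not just the differentials) commute with $d_\natural$ up to terms of higher filtration order, which is part of the content of \cite{KM_filtrations} and not an automatic consequence of Theorem~\ref{instanton cube}.
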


In general, a movie $M$ between diagrams $D_0$ and $D_1$ consisting of 0-, 1- and 2-handle attachments, and of Reidemeister moves, induces a cobordism $S_M$ between the corresponding knots $K_0$ and $K_1$. Such a movie induces a morphism $c(M): C(D_0) \to C(D_1)$ between the corresponding Khovanov complexes by composing the Khovanov morphisms from handle attachments and the chain homotopy equivalences coming from the Reidemeister moves in the respective order. In particular, there is a resulting map $\Khr(M)$ from the Khovanov homology of $K_0$ to $K_1$.
\\

\section{Constraints on Floer homology}
In this section we show how conclusions on the Kronheimer-Mrowka or Ozsv\'ath-Szab\'o spectral sequences for a specific knot or link might be made from link cobordisms.

\subsection{Morphisms of spectral sequences}
Given two spectral sequences $(E_r,d_r)$ and $(E'_r,d'_r)$, a collection of morphisms $(f_r: E_r \to E'_r)$ is said to be a morphism of spectral sequences if 
\begin{itemize} 
\item for any $r$ the morphism $f_r$ is a morphism of chain complexes from the complex $(E_r,d_r)$ to $(E'_r,d'_r)$, i.e. $f_r$ intertwines the differentials $d_r$ and $d'_r$, and
\item the morphism $f_{r+1}$ is the morphism induced by $f_r$ on homology under the isomorphisms $H(E_r,d_r) \cong E_{r+1}$ and $H(E'_r,d'_r) \cong E'_{r+1}$, for any $r \in \N$. 
\end{itemize}
For instance, having filtered complexes $(C,d)$ and $(C',d')$, filtered by families $(\mathscr{F}^nC)_n$ and $(\mathscr{G}^nC')_n$, and a morphism of chain complexes $f:C \to C'$ that respects the filtrations -- meaning that $f(\mathscr{F}^n C) \subseteq \mathscr{G}^{n} C'$ for all $n$ -- the map $f$ induces a morphism between the two spectral sequences coming from the filtrations. 

\begin{definition}
We say that an element $x \in E_s$ is an $s$-boundary if $x$ is in the image of $d_s$, and we say that $x$ is an $s$-cycle if $d_s(x) = 0$.  We say that an element $x \in E_s$ {\em is an $\infty$-cycle} if $x$ lies in the kernel of $d_s$, and its homology class $[x]_{t}$ is a $(t+1)-cycle$ for all $t \geq s$.

\end{definition}

\begin{lemma}\label{algebraic lemma}
 Let $(f_r): (E_r,d_r) \to (E'_r,d'_r)$ be a morphism of spectral sequences. 
 \begin{enumerate}
 	\item[(i)] If $x \in E_s$ is an $s$-cycle then $f_s(x) \in E'_s$ is an $s$-cycle. 
	\item[(ii)] If $x \in E_s$ is an $s$-boundary then $f_s(x) \in E'_s$ is an $s$-boundary.
	\item[(iii)] If $x \in E_s$ is an $\infty$-cycle then $f_s(x) \in E'_s$ is an $\infty$-cycle.
\end{enumerate}
\end{lemma}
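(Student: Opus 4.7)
The proof is essentially formal, unpacking the definition of a morphism of spectral sequences. There is no real obstacle; the three parts are successively easier consequences of the two bullet-point conditions in the definition. I would organize the plan as follows.

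For part (i), the single relevant ingredient is that $f_s$ intertwines $d_s$ and $d'_s$. If $x \in E_s$ is an $s$-cycle then
\begin{equation*}
d'_s(f_s(x)) \;=\; f_s(d_s(x)) \;=\; f_s(0) \;=\; 0,
\end{equation*}
so $f_s(x)$ is an $s$-cycle. For part (ii), if $x = d_s(y)$ for some $y \in E_s$, the same commutation identity gives
\begin{equation*}
f_s(x) \;=\; f_s(d_s(y)) \;=\; d'_s(f_s(y)),
\end{equation*}
so $f_s(x)$ lies in the image of $d'_s$, as required.

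For part (iii), I would argue by induction on $t \geq s$, using both bullet points in the definition. The base case $t = s$ is exactly part (i). For the inductive step, assume $f_t$ carries the class $[x]_{t-1} \in E_t$ (identified with $[x]_{t-1}$ under $H(E_{t-1}, d_{t-1}) \cong E_t$) to a $t$-cycle in $E'_t$, and observe that by the second bullet in the definition, $f_{t+1}$ is the map induced by $f_t$ under the identifications $H(E_t, d_t) \cong E_{t+1}$ and $H(E'_t, d'_t) \cong E'_{t+1}$. In particular $f_{t+1}([y]_t) = [f_t(y)]_t$ for every $t$-cycle $y \in E_t$. Applying this to $y$ representing the class of $x$ at the $E_t$ page, and noting that by hypothesis $[x]_t$ is a $(t+1)$-cycle, we invoke part (i) at level $t+1$ to conclude that $f_{t+1}([x]_t) = [f_t(x)]_t$ is a $(t+1)$-cycle in $E'_{t+1}$. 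Iterating, $[f_s(x)]_t$ is a $(t+1)$-cycle in $E'_{t+1}$ for every $t \geq s$, which is precisely the assertion that $f_s(x)$ is an $\infty$-cycle.

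The only mildly delicate point is bookkeeping the identifications $H(E_t, d_t) \cong E_{t+1}$ at each step, but this is built into the definition of a morphism of spectral sequences and requires no further work. No additional hypotheses (e.g.\ boundedness or convergence) are needed for these three purely algebraic statements.
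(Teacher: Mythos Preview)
Your proof is correct and follows exactly the approach indicated by the paper, which dispatches the lemma in a single sentence (``The result follows from the fact that morphisms of chain complexes preserve cycles and boundaries''). You have simply written out in detail what that sentence encodes, including the inductive use of the second condition in the definition for part (iii); nothing further is needed.
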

\begin{proof}
The result follows from the fact that morphisms of chain complexes preserve cycles and boundaries.
\end{proof}

A chain map $c: C(D_0) \to C(D_1)$ as in Proposition \ref{filtrations cobordism map}
respects the filtrations by $\Z \oplus \Z$ on the two complexes $C(D_0)$ and $C(D_1)$ up to a global shift. Therefore, such a chain map induces a graded morphism of spectral sequences $(c_r): (E_r(D_0)) \to (E_r (D_1))$, where $(E_r(D_i))$ is the spectral sequence converging to the Floer homology group $I^\natural(K_i)$. Each page after the Khovanov pages is a topological invariant, i.e. depends on the links $K_i$ only. In the proof of Proposition \ref{filtrations cobordism map} above in \cite{KM_filtrations} the chain map $c$ is in fact obtained by representing the cobordism $S$ by a movie $M$ between diagrams $D_0$ and $D_1$ for $K_0$ respectively $K_1$, and by then checking the claim for the map induced on reduced instanton knot Floer homology by the particular handle and Reidemeister moves.

One is tempted to believe that at the Khovanov page, the corresponding morphism $c(M)$ between the instanton Floer chain complexes as in the last Proposition is just equal to the map $\Khr(M): \Khr(K_0) \to \Khr(K_1)$ in Khovanov homology. In fact, such a functoriality property remains open in \cite{KM_filtrations}. What we {\em can} say, however, is that there is such a result in a particular situation.

\begin{proposition}\label{cobordism theorem}
	Let $D_0$ and $D_1$ be diagrams of knots $K_0$ and $K_1$. Let $S$ be a cobordism from $K_0$ to $K_1$ that is represented by a movie $M$ between the diagrams $D_0$ and $D_1$. Let us assume this movie consists only of isotopies of the diagrams (outside of balls containing the crossings) and handle attachment of index 1 (excluding Reidemeister moves). Then the map $I^\natural(S): I^\natural(K_0) \to I^\natural(K_1)$ is induced by a morphism of chain complexes
	\[
		c(M): (C(D_0),d_\natural(D_0)) \to (C(D_1), d_\natural(D_1))
	\]
respecting the bifiltration by $\Z \oplus \Z$, and this morphism induces the map $\Khr(M) : \Khr(K_0) \to \Khr(K_1)$ at the Khovanov page of the Kronheimer-Mrowka spectral sequence.
\end{proposition}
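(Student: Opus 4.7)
The plan is to decompose the movie $M$ into its elementary steps and verify the statement on each. Since $M$ consists only of planar isotopies (not passing through crossings) and 1-handle attachments, we may write $M = M_n \circ \cdots \circ M_1$ where each $M_k$ is either such an isotopy or a single 1-handle attachment, and set $c(M) = c(M_n) \circ \cdots \circ c(M_1)$ for the corresponding composition of elementary chain maps.

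For an elementary isotopy the underlying chain complex $C(D)$ is literally unchanged; the associated chain map is the identity, which respects the bifiltration and induces the identity on reduced Khovanov homology. So isotopies contribute nothing substantial, and the claim reduces to the case where $M$ consists of a single 1-handle attachment between otherwise identical diagrams.

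For such a 1-handle attachment $M_k$ the cobordism $S_k$ has $\chi(S_k) = -1$ and can be isotoped (rel boundary) so that $S_k \cdot S_k = 0$, whence Proposition \ref{filtrations cobordism map} supplies a chain map $c(M_k):(C(D_0),d_\natural(D_0)) \to (C(D_1),d_\natural(D_1))$ of order $\geq (0,-1)$. The crucial input is the explicit construction of this map in \cite{KM_filtrations}: Kronheimer--Mrowka build the 1-handle cobordism map on the cube of resolutions by the same local saddle (merge/split) model that defines the Khovanov 1-handle map, so its associated graded piece in the quantum filtration at bidegree $(0,-1)$ is precisely $\Khr(M_k)$. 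At the Khovanov page, which is the $E_1$-page for the quantum filtration (respectively the $E_2$-page for the homological filtration), only this leading term contributes, hence the induced map is $\Khr(M_k)$.

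Composing, $c(M)$ is a chain map of $\Z\oplus\Z$-filtered complexes inducing $I^\natural(S)$ on homology by the functoriality recalled before Proposition \ref{filtrations cobordism map}; the induced map on the Khovanov page is $\Khr(M_n) \circ \cdots \circ \Khr(M_1) = \Khr(M)$ by the elementary case together with Lemma \ref{algebraic lemma} applied at each stage. The main obstacle in carrying out this plan is the identification in the previous paragraph, namely verifying that the leading filtered piece of the instanton 1-handle chain map coincides on the nose with the Khovanov 1-handle map. Everything else is formal; the heart of the argument is a local calculation performed entirely within Kronheimer--Mrowka's definition of the 1-handle cobordism map, using that the 1-handle is attached in a ball disjoint from all crossings so that the only resolution-dependent data is the Frobenius (co)multiplication on the two (or one) circles meeting the band.
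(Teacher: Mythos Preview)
Your proof is correct and follows the same line as the paper's own sketch: both reduce to the elementary 1-handle case (planar isotopies being trivial, Reidemeister moves being excluded by hypothesis) and then defer to Kronheimer--Mrowka's construction of the cobordism map on the cube of resolutions, where the leading filtered piece is by design the Khovanov saddle map (the paper points to Section~8 of \cite{KM_ss}, you to the construction in \cite{KM_filtrations}). One minor point: the appeal to Lemma~\ref{algebraic lemma} in your last paragraph is misplaced---the fact that a composition of filtered chain maps induces the composition of the induced maps at each page is purely formal and does not use the cycle/boundary preservation statements of that lemma.
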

{\em Sketch of proof.}
This follows by inspecting the Proof of Proposition \ref{filtrations cobordism map} and of Theorem \ref{ss} and \ref{instanton cube} in the original source \cite{KM_filtrations}. As mentioned before, a general surface can be represented by a movie consisting of isotopies of the diagram, Reidemeister moves, and the addition of handles of index 0, 1 or 2. In our situation, we are just left with the morphism in instanton knot Floer homology induced by the 1-handle attachment. That this does the desired thing is just the consequence of Section 8 in \cite{KM_ss}, where it is shown that the Kronheimer-Mrowka spectral sequence starts at the Khovanov chain complex.
\qed

The corresponding result for the Ozsv\'ath-Szab\'o spectral sequence seems to be known and follows the same line of argument.

\begin{proposition}\label{cobordism theorem HF}
	  Let $D_0$, $D_1$, $K_0$, $K_1$, $S$, and $M$ be as above.  Working now of course with $\Z/2$ coefficients, the map $\HF(\Sigma(S)): \HF(K_0) \to \HF(K_1)$ is induced by a morphism of filtered chain complexes inducing the map $\Khr(M) : \Khr(K_0) \to \Khr(K_1)$ at the $E_2$-page of the Ozsv\'ath-Szab\'o spectral sequence. 
\end{proposition}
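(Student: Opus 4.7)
The plan is to mirror the argument sketched for Proposition \ref{cobordism theorem}, but replacing the Kronheimer-Mrowka cube-of-resolutions picture by the original cube-of-resolutions construction of Ozsv\'ath-Szab\'o in \cite{OSb2c}. Recall that that construction associates to a link diagram $D$ with $n$ crossings a cube $\{0,1\}^n$ whose vertices carry $\widehat{\text{\em HF}}$ of the branched double cover of the complete resolution (each such manifold being a connected sum of copies of $S^1 \times S^2$), and whose edge maps are the triangle maps associated to the surgery exact triangle. The total complex is filtered by the cube coordinate, and Ozsv\'ath-Szab\'o identify the $E_2$-page of the resulting spectral sequence with $\Khr(D)$. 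Because both sides of the claimed equality are natural with respect to composition, it is enough to treat a single isotopy and a single $1$-handle attachment separately.

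First I would dispatch the isotopy case: by the standard chain-homotopy equivalences at the Khovanov level and the corresponding quasi-isomorphisms at the level of the cube of branched double covers (which are built out of blow-down/blow-up and handle-slide moves on the Heegaard diagrams), the induced maps are, up to filtered chain homotopy, the identity, and they visibly induce the identity on $E_2 = \Khr$. Next I would handle the single $1$-handle $h$ between $D_0$ and $D_1$. Because $h$ is supported in a ball disjoint from the crossings, each vertex resolution of $D_0$ extends to a vertex resolution of $D_1$ (with one extra arc replaced), and $h$ covers a $4$-dimensional $1$- or $2$-handle in the branched double cover, depending on whether the saddle merges two components of the resolution or splits one. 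This produces a morphism of cubes (a chain map between the total complexes) whose components are the cobordism maps on $\widehat{\text{\em HF}}$ of connected sums of $S^1 \times S^2$'s induced by these elementary $4$-dimensional handles. This map respects the cube filtration by construction, so induces a morphism of the Ozsv\'ath-Szab\'o spectral sequences.

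It remains to identify the induced $E_2$-map with $\Khr(M)$. For this I would use the standard calculation that $\widehat{\text{\em HF}}(\#^k (S^1 \times S^2)) \cong \Lambda^\ast(\Z/2)^k$ together with the fact, already used by Ozsv\'ath-Szab\'o to match the vertex groups of their cube with the Khovanov vertex groups, that the $4$-dimensional $1$- and $2$-handle maps on these exterior algebras are given by the same merge and split (multiplication and comultiplication) formulas as in Khovanov's Frobenius algebra $\mathbb{F}_2[x]/(x^2)$. Since the Khovanov saddle cobordism map $\Khr(M)$ is by definition the corresponding merge or comultiplication on the vertex groups, the two $E_2$-maps coincide vertex by vertex, and so the induced maps on $E_2 = \Khr$ agree. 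Passing to reduced homology, where one marks a basepoint disjoint from $h$, requires no additional input.

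The main potential obstacle is precisely this last identification of the branched-double-cover handle map with the Khovanov saddle map at the $E_2$-page. On each single vertex of the cube this is essentially already in \cite{OSb2c} (it is what forces the $E_2$-page to be Khovanov homology in the first place); what must be checked is that it holds simultaneously over the whole cube, compatibly with the edge maps of both $D_0$ and $D_1$, so that the chain map constructed above really is the one induced by the branched cover of $S$. This compatibility reduces to a local verification in the ball containing the handle, where only finitely many diagram types occur and the analysis is identical to the corresponding step in the proof of Proposition \ref{cobordism theorem}.
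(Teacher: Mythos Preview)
Your proposal is correct and follows exactly the line the paper indicates: the paper gives no proof beyond the one-sentence remark that the result ``seems to be known and follows the same line of argument'' as Proposition \ref{cobordism theorem}, and your sketch is a faithful expansion of that line in the Ozsv\'ath--Szab\'o setting. One small point: the movies under consideration explicitly exclude Reidemeister moves, so the ``isotopy'' case is trivial (the cube is literally unchanged) and your appeal to blow-up/blow-down and handle-slide quasi-isomorphisms is unnecessary there; the substantive content is entirely in the $1$-handle case, which you treat correctly.
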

\qed

Suppose now that $S$ is a link cobordism between $K_0$ and $K_1$ with only index $1$ critical points.  In fact, it is not too hard to see that there exist diagrams $D_0$ and $D_1$ and a movie $M$ (presenting $K_0$, $K_1$, and $S$) which satisfy the requirements of the propositions above.  For those wanting details of how to construct such a movie $M$ we refer them to the proof of Theorem 1.6 of \cite{Lobb}.

\begin{equation*}
\begin{tikzcd}
 \Khr(K_0) \arrow{rr} & & \Khr(K_1) \arrow{ld} \\
	& \Khr(K_2) \arrow{ul} &
\end{tikzcd}
\end{equation*}

For knots $K_0$, $K_1$, $K_2$ related by the unoriented skein moves, the maps in the long exact sequence on Khovanov homology are each induced by some $1$-handle attachment up to Reidemeister-isomorphism.  Applying Proposition \ref{cobordism theorem} in this case gives us the following:

\begin{proposition}\label{exact triangle khovanov page}
	Suppose we are given knots or links $K_0, K_1$ and $K_2$ that only differ inside a ball by the unoriented skein moves, then there are obvious cobordisms $S_{01}$ from $K_0$ to $K_1$, $S_{12}$ from $K_1$ to $K_2$, and $S_{20}$ from $K_2$ to $K_0$ such that each $S_{ij}$ has a single critical point of index $1$. Then, fixing $i \in \{ 0, 1, 2 \}$, we can arrange that the map $I^\natural(S_{i,i+1}): I^\natural(K_i) \to I^\natural(K_{i+1})$ is induced by a filtered map on chain complexes 
	\begin{equation*}
		c : (C(D_i),d_\natural(D_i)) \to (C(D_{i+1}),d_\natural(D_{i+1})) , \  
	\end{equation*}
with $D_i$, $D_{i+1}$ being diagrams for $K_i$, $K_{i+1}$,
and such that the induced morphism between the resulting Kronheimer-Mrowka spectral sequences fits into an exact triangle at the Khovanov page relating $\Khr(K_0)$, $\Khr(K_1)$, and $\Khr(K_2)$.

\end{proposition}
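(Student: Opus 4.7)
The plan is to combine the filtered naturality result, Proposition \ref{cobordism theorem}, with the unoriented skein exact triangle of Proposition \ref{triangle}. First, I would make the cobordisms $S_{i,i+1}$ explicit. Since $K_0$, $K_1$, $K_2$ only differ inside a ball where they display the three local resolutions of Figure \ref{smooth}, each pair $(K_i, K_{i+1})$ is related by a single unoriented saddle move. Choosing diagrams $D_0, D_1, D_2$ that agree outside this ball, the saddle move defines a cobordism $S_{i,i+1} \subset [0,1] \times S^3$ with exactly one index-$1$ critical point, and it is represented by a movie $M_{i,i+1}$ from $D_i$ to $D_{i+1}$ consisting only of isotopies outside the ball together with a single $1$-handle attachment. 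This is precisely the situation allowed in Proposition \ref{cobordism theorem}.

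Next, I would apply Proposition \ref{cobordism theorem} to each movie $M_{i,i+1}$ in turn. This directly produces the desired filtered chain map
\[
 c_{i,i+1}: (C(D_i), d_\natural(D_i)) \to (C(D_{i+1}), d_\natural(D_{i+1})) ,
\]
which induces $I^\natural(S_{i,i+1})$ on homology and which induces the Khovanov $1$-handle map $\Khr(M_{i,i+1}) : \Khr(K_i) \to \Khr(K_{i+1})$ on the Khovanov page of the Kronheimer--Mrowka spectral sequence.

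Finally, to close the argument, I would identify the three induced maps at the Khovanov page with the three connecting morphisms in the unoriented skein triangle of Proposition \ref{triangle}. This identification is essentially immediate from the final sentence of Proposition \ref{triangle}, which already asserts that the maps in the Khovanov skein exact triangle are induced by precisely these standard $1$-handle cobordisms. Assembling the three chain maps $c_{0,1}$, $c_{1,2}$, $c_{2,0}$ and passing to the Khovanov page then exhibits the exact triangle of the proposition.

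The main obstacle is really absorbed by Proposition \ref{cobordism theorem}: the nontrivial content is the naturality statement that a $1$-handle movie induces the expected Khovanov $1$-handle map on the Khovanov page. Once that is granted, producing the movies $M_{i,i+1}$ and matching the resulting maps to those of Proposition \ref{triangle} is a routine unpacking of definitions, since the diagrams $D_0, D_1, D_2$ can be arranged to coincide outside the small ball where the skein modification takes place, so that the three movies simultaneously satisfy the hypotheses of Proposition \ref{cobordism theorem}.
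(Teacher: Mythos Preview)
Your approach matches the paper's: both deduce the proposition by applying Proposition~\ref{cobordism theorem} to the $1$-handle cobordisms in the skein triple and then invoking Proposition~\ref{triangle} to identify the induced Khovanov-page maps with the exact-triangle maps. The paper's own argument is essentially just the sentence immediately preceding the proposition.

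There is one small slip worth flagging. With the standard diagrams $D_0$, $D_1$, $D_2$ agreeing outside the ball, the diagram $D_2$ carries one more crossing than $D_0$ or $D_1$. A $1$-handle attachment in a movie does not change the crossing count, so the movies $M_{12}$ and $M_{20}$ cannot literally be ``isotopy plus a single $1$-handle'' between \emph{these} particular diagrams, and Proposition~\ref{cobordism theorem} explicitly excludes Reidemeister moves. The paper addresses this in the paragraph just before the proposition: any cobordism with only index-$1$ critical points admits, for \emph{some} choice of diagrams, a movie consisting solely of isotopies and $1$-handles (referencing \cite{Lobb}), and the exact-sequence maps are said to be $1$-handle maps ``up to Reidemeister-isomorphism.'' So for each fixed $i$ separately one chooses suitable diagrams, applies Proposition~\ref{cobordism theorem}, and identifies the resulting Khovanov-page map with the skein map via Proposition~\ref{triangle} (both being induced by the same topological cobordism $S_{i,i+1}$). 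Your claim that a single simultaneous choice of $D_0, D_1, D_2$ makes all three movies satisfy the hypotheses of Proposition~\ref{cobordism theorem} is slightly too strong, but the repair is precisely what the paper indicates.
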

\begin{proposition}
The analogue of the previous Proposition holds for the Ozsv\'ath-Szab\'o spectral sequence as well.
\end{proposition}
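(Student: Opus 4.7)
The plan is to mirror the proof of Proposition \ref{exact triangle khovanov page} verbatim, substituting Proposition \ref{cobordism theorem HF} in place of Proposition \ref{cobordism theorem}. First, for $i \in \{0,1,2\}$ (indices mod $3$), I would realise the cobordism $S_{i,i+1}$ as a movie $M_{i,i+1}$ between diagrams $D_i$ and $D_{i+1}$ consisting only of diagram isotopies and a single index-$1$ handle attachment. The existence of diagrams and such a movie presenting the three skein cobordisms simultaneously is exactly the input used in the paragraph preceding Proposition \ref{exact triangle khovanov page}, so nothing new has to be arranged here.

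Next, I would invoke Proposition \ref{cobordism theorem HF} on each $M_{i,i+1}$ to obtain a filtered chain map
\[
 c_{i,i+1} : C(D_i) \to C(D_{i+1})
\]
which induces $\HF(\Sigma(S_{i,i+1}))$ on the double branched covers and which induces the Khovanov map $\Khr(M_{i,i+1}) : \Khr(K_i) \to \Khr(K_{i+1})$ at the $E_2$-page of the Ozsv\'ath-Szab\'o spectral sequence. It remains to identify, at the $E_2$-page, the triple $(\Khr(M_{01}), \Khr(M_{12}), \Khr(M_{20}))$ with the maps in the unoriented skein exact triangle of Proposition \ref{triangle}. This identification is immediate because the Khovanov skein triangle is itself constructed from precisely the $1$-handle attachment maps between the two resolutions of the distinguished crossing, and Khovanov homology is functorial with respect to such handle movies (up to overall sign, which does not affect exactness).

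The only point requiring care -- and the main obstacle -- is that one needs the Heegaard-Floer analogue of Kronheimer-Mrowka's identification of the $1$-handle map with the Khovanov $1$-handle map at the $E_2$-page, that is, the content of Proposition \ref{cobordism theorem HF}. As the authors remark immediately before the statement, this analogue is folklore and proceeds by the same line of argument as in the instanton setting (combining the filtered cobordism map with the identification of the $E_2$-page with Khovanov homology in \cite{OSb2c}). Granting Proposition \ref{cobordism theorem HF}, the rest of the proof is purely formal and is word-for-word that of Proposition \ref{exact triangle khovanov page}.
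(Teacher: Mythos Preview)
Your proposal is correct and matches the paper's approach exactly. The paper gives no explicit proof of this proposition at all --- it simply places a \qed\ after the statement --- precisely because the argument is word-for-word that of Proposition~\ref{exact triangle khovanov page} with Proposition~\ref{cobordism theorem HF} substituted for Proposition~\ref{cobordism theorem}, which is what you have written out.
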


\hfill \qed

\section{Computations}

Essentially most of the arguments in this section progress by finding cobordisms between knots whose spectral sequences we wish to know and knots whose spectral sequences are necessarily trivial after the Khovanov page.

\subsection{The $(4,5)$ torus knot and instanton homology.}

In \cite{KM_filtrations} the example of the $(4,5)$ torus knot $T(4,5)$ is analysed and it is determined that there is exactly one non-trival differential in the spectral sequence after the Khovanov page.  Furthermore, it was shown that this differential would cancel exactly one of eight explicit pairs of generators in the Khovanov homology.

The technique in this paper almost determines the spectral sequence completely: we are able reduce the number of possible pairs to two, enabling us to give the filtration on $I^\natural(K)$ in almost all degrees.  More precisely, after proving Proposition \ref{45} we can write $I^\natural(K) = V^6 \oplus W^1$ where $V$ and $W$ are filtered vector spaces of dimensions $6$ and $1$ respectively and we know the filtration on $V$ completely and there are two possibilities for the filtration on $W$.

We note that none of the techniques currently available to constrain the filtered instanton homology (our technique included) discriminates between the various filtrations corresponding to choices $(a,b) \in \Z \oplus \Z$, $a,b \geq 1$.  \emph{A priori} it is possible that different choices of $(a,b)$ give different spectral sequences, and so it may be the case that, of the two possible canceling pairs in the spectral sequence for $T(4,5)$, each pair does in fact occur for different choices of filtration.

In the plot below we show the reduced Khovanov homology over $\Q$ of $T(4,5)$ as the solid discs.  The horizontal axis is the homological grading $i$, and we follow Kronheimer and Mrowka in making the vertical coordinate $j-i$ where $j$ is the quantum grading.  In \cite{KM_filtrations} it is shown that there is exactly one non-trivial differential in their spectral sequence.  

Using the $\Z/4$-grading on $I^\natural (K)$, Kronheimer-Mrowka showed that this differential will go from one of the three generators on line $j-i = 13$ to one of the three generators on the line $j-i = 16$.  The exception is that a differential from $(6, 13)$ to $(5,16)$ is impossible. In fact, as quoted in Theorem \ref{instanton cube}, the differential $d_\natural$ that computes the instanton Floer homology $I^\natural$ from a resolution cube in the spectral sequence preserves the descending quantum filtrations. Hence there are \emph{a priori} eight possible differentials.

\begin{proposition}
\label{45}
The differential in the Kronheimer-Mrowka spectral sequence for $T(4,5)$ either goes from the generator at bigrading $(2,13)$ to the generator at $(9,16)$ or goes from $(4,13)$ to $(9,16)$.
\end{proposition}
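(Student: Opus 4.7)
The plan is to use repeatedly the morphism of spectral sequences provided by Proposition \ref{cobordism theorem} (and its skein-triangle form Proposition \ref{exact triangle khovanov page}) together with Lemma \ref{algebraic lemma}. Specifically, if $S$ is a $1$-handle cobordism from $T(4,5)$ to some knot $K'$ whose reduced Khovanov homology is thin (so that Proposition \ref{knots with thin homology} forces the Kronheimer--Mrowka spectral sequence for $K'$ to collapse at the Khovanov page), then any generator $x\in\Khr(T(4,5))$ with non-zero image $\Khr(S)(x)\in\Khr(K')$ is automatically an $\infty$-cycle in the spectral sequence for $T(4,5)$, and so cannot support the unique surviving differential. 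Dually, by running the cobordism from $K'$ \emph{into} $T(4,5)$, a class hit non-trivially in $\Khr(T(4,5))$ cannot be a boundary. Our task reduces to producing enough such cobordisms to exclude six of the eight \emph{a priori} possible differentials.

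Concretely, I would fix the standard closure-of-braid diagram of $T(4,5)$ coming from $(\sigma_1\sigma_2\sigma_3)^5$ and select a short list of crossings whose $0$- or $1$-smoothings yield (possibly after isotopy and a small number of further $1$-handle attachments) quasi-alternating or thin knots and links. On the chain level the $1$-handle maps are the standard Frobenius-algebra saddle maps, and Proposition \ref{filtrations cobordism map} tightly constrains their effect on the bigradings, so the image can only land in the quantum-degree slices of $\Khr(K')$ dictated by the Euler characteristic and self-intersection of the cobordism. This reduces each check to a finite bookkeeping on the cube of resolutions restricted to the bidegrees $(i,j)$ of $T(4,5)$ with $j-i\in\{13,16\}$, i.e.\ the six generators at $(2,13),(4,13),(6,13),(5,16),(7,16),(9,16)$.

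The exclusion argument itself breaks into three blocks. First, exhibit thin knots $K'$ and $1$-handle cobordisms $T(4,5)\to K'$ under which the classes at $(5,16)$ and $(7,16)$ have non-zero images in $\Khr(K')$, so neither can be a boundary in the spectral sequence for $T(4,5)$; this eliminates six of the eight candidates in one stroke (every pair with target $(5,16)$ or $(7,16)$). Second, exhibit a cobordism certifying that the class at $(6,13)$ is an $\infty$-cycle (equivalently, rule out $(6,13)\to(9,16)$), leaving only the two differentials of the statement. Third, confirm that no similar cobordism can rule out $(2,13)$ or $(4,13)$, so neither of the two remaining options can be excluded by this method — consistent with the conclusion.

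The main obstacle is not the algebra but the geometry: torus knots are notoriously non-thin, so the crux is to locate genuinely useful $1$-handle resolutions of (and into) $T(4,5)$ that land on quasi-alternating knots. Because a single skein resolution of a $T(4,5)$-crossing will typically still have non-thin Khovanov homology, one expects to iterate a small number of $1$-handles, and then to verify thinness (or quasi-alternatingness) by hand or via KnotInfo. Once such short cobordisms are in hand, the remaining computations of $\Khr(S)$ on the six critical generators are finite, constrained by the filtration shifts of Proposition \ref{filtrations cobordism map}, and amenable to a routine check in a Khovanov-homology package; the result is then assembled from the three blocks above.
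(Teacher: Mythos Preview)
Your overall plan matches the paper's: split into two steps, one showing $(5,16)$ and $(7,16)$ are never boundaries (forcing $(9,16)$ to be the target), and one showing $(6,13)$ is an $\infty$-cycle. However, your opening paragraph has the logic of Lemma~\ref{algebraic lemma} reversed. From an \emph{outgoing} morphism $T(4,5)\to K'$ with $K'$ collapsed, a class $x$ with $\Khr(S)(x)\neq 0$ is forced to be \emph{not a boundary} (boundaries map to boundaries, and the only boundary in a collapsed sequence is zero); it is \emph{not} thereby forced to be an $\infty$-cycle. Dually, from an \emph{incoming} morphism $K'\to T(4,5)$, a class in the image is an $\infty$-cycle (images of $\infty$-cycles are $\infty$-cycles), but need not avoid being a boundary. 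Your three blocks happen to use the correct directions, but the exposition preceding them has the two conclusions swapped; as stated, the general principle you appeal to is false.

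Two further points of comparison with the paper. First, the knot used to rule out $(5,16)$ and $(7,16)$ is \emph{not} thin: it is obtained from $T(4,5)$ by a single crossing change (realised as two $1$-handles through an intermediate link $L$), and its spectral sequence collapses because $\rk\Khr(K)=9$ equals the Alexander-polynomial lower bound of Proposition~\ref{alex polynomial lower bound}, not via Proposition~\ref{knots with thin homology}. Restricting your search to thin or quasi-alternating targets, as you propose, may not produce a workable cobordism. The rank of the composite map is then pinned to exactly $6$ by a skein-triangle argument with the trefoil as the third term, which forces the components at $(5,16)$ and $(7,16)$ to be nonzero. Second, for the $\infty$-cycle at $(6,13)$ the paper uses an explicit \emph{incoming} non-orientable $1$-handle from the alternating knot $5_2$ into $T(4,5)$; after computing the bidegree shift and checking that the cone has homological width~$2$, the image lands on $(6,13)$. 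Finally, your count ``eliminates six of the eight'' is off by one: ruling out $(5,16)$ and $(7,16)$ as targets kills five possibilities, and the $(6,13)$ argument kills one more.
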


The proof of this proposition is broken into two lemmata.

\begin{lemma}
The generators at bigradings $(5,16)$ and $(7,16)$ are never boundaries after the Khovanov page in the spectral sequence.  Hence, since we know there is exactly one non-trivial differential, the generator at $(9,16)$ must be the boundary.
\end{lemma}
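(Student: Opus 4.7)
The plan is to deduce the lemma from the $1$-handle spectral-sequence morphism of Proposition \ref{cobordism theorem} combined with Lemma \ref{algebraic lemma}(ii). For each generator $x$ to be excluded (at $(5,16)$ and at $(7,16)$), the goal is to exhibit a cobordism $S : T(4,5) \to K'$ built from $1$-handle additions (and ambient isotopies) with $K'$ a knot having thin reduced Khovanov homology, and such that the induced Khovanov cobordism map sends $x$ to a nonzero class of $\Khr(K')$. By Proposition \ref{knots with thin homology}, the Kronheimer-Mrowka spectral sequence for $K'$ collapses at the Khovanov page, so all higher differentials $d'_s$ vanish. Since the source spectral sequence of $T(4,5)$ has only one nontrivial higher differential (Kronheimer-Mrowka), the class of $x$ persists unchanged through each page, and if $x$ were ever an $s$-boundary, Lemma \ref{algebraic lemma}(ii) would force $f_s(x)$ to be an $s$-boundary in the target—and hence zero—contradicting our construction. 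Ruling out $(5,16)$ and $(7,16)$ this way leaves $(9,16)$ as the only possible target of the unique nontrivial differential.

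The natural way to obtain such cobordisms is through Proposition \ref{exact triangle khovanov page}: choose a crossing in a standard diagram of $T(4,5)$ and pass to one of its unoriented resolutions via the $1$-handle cobordism of the skein triangle. The filtered degree of the resulting Khovanov map is controlled by Proposition \ref{filtrations cobordism map}; together with the known computations of $\Khr(T(4,5))$ and $\Khr(K')$, this typically pins the image of $x$ to a small set of bigradings in $\Khr(K')$, often forcing non-vanishing automatically when the target has rank one in the relevant bigrading. Otherwise, a direct chain-level computation on an enhanced-state representative of $x$ completes the verification.

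The main obstacle is computational rather than conceptual: cataloguing crossings of $T(4,5)$ whose resolutions give thin knots or links (routine with standard Khovanov software), identifying among these a cobordism that detects each of the two targeted generators, and carrying out any required chain-level calculations in the middle of a fairly large Khovanov cube. It is possible that no single target knot works for both $(5,16)$ and $(7,16)$, in which case separate cobordisms—possibly composed of several $1$-handles, in the spirit of Proposition \ref{cobordism theorem}—must be chosen for each generator in turn.
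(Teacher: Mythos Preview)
Your strategy is exactly the one the paper employs, but what you have written is a research plan, not a proof: no cobordism is actually constructed and no non-vanishing is verified.  Those two steps are the entire content of the lemma, and the second one in particular is where the real work lies.  Saying that the obstacle is ``computational rather than conceptual'' and that ``standard Khovanov software'' will handle it does not discharge the obligation.

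For comparison, the paper's implementation differs from what you anticipate in two respects.  First, the cobordism used is a single genus-$1$ cobordism (a crossing change, hence a composition of two $1$-handles) from $T(4,5)$ to the knot $K$ obtained by switching one crossing between the first two braid strands; this $K$ is \emph{not} Khovanov-thin, but its spectral sequence nevertheless collapses because $\rk\Khr(K)=9$ equals the sum of the absolute values of the Alexander coefficients (Proposition~\ref{alex polynomial lower bound}), so your insistence on a thin target is unnecessarily restrictive.  Second---and this is the point you are missing---the non-vanishing of the cobordism map on the generators at $(5,16)$ and $(7,16)$ is not checked by any chain-level computation.  Instead one factors the cobordism through the intermediate $2$-component link $L$; each of the two $1$-handle maps then sits in a skein triangle whose third term is $\Khr$ of the trefoil (rank~$3$), and the resulting rank identities force the composite $\Khr(\Sigma)$ to have rank exactly~$6$.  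Since there are only six bidegrees in which $\Khr(\Sigma)$ can possibly be nonzero, every one of those components---including the ones originating at $(5,16)$ and $(7,16)$---must be nonzero.  This rank argument is the idea that turns the plan into a proof.
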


\begin{proof}
We consider the genus $1$ knot cobordism $\Sigma$ obtained as follows.  First express $T(4,5)$ as a braid closure.  Changing the sign of a crossing between the first two strands of the braid gives a knot that we shall call $K$.  Using time as the second coordinate, we have a cylinder embedded in $S^3 \times [0,1]$ with a single point of self-intersection which has boundary $T(4,5) \subset S^3 \times \{ 0 \}$ and $K \subset S^3 \times \{ 1 \}$.  Replacing the point of self-intersection with a piece of genus gives a knot cobordism $\Sigma$ between $T(4,5)$ and $K$.

We observe that the rank of reduced Khovanov homology of $K$ is $9$ and the sum of the absolute values of the Alexander polynomial of $K$ is also $9$.  Hence the Kronheimer-Mrowka spectral sequence associated to $K$ collapses at the Khovanov page by Proposition \ref{alex polynomial lower bound}.

\begin{center}
\begin{sseq}[grid=go, entrysize=6mm]{0...9}{9...16}
\ssmoveto{0}{9}
\ssdrop{\circle}
\ssmoveto{2}{11}
\ssdrop{\circle}
\ssmoveto{3}{12}
\ssdrop{\circle}
\ssmoveto{4}{11}
\ssdrop{\circle}
\ssmoveto{5}{14}
\ssdrop{\circle}
\ssmoveto{6}{13}
\ssdrop{\circle}
\ssmoveto{7}{14}
\ssdrop{\circle}
\ssmoveto{8}{15}
\ssdrop{\circle}
\ssmoveto{9}{16}
\ssdrop{\circle}
\ssmoveto{0}{11}
\ssdropbull
\ssarrow[curve=0.1]{0}{-2}
\ssmoveto{2}{13}
\ssdropbull
\ssarrow[curve=0.1]{0}{-2}
\ssmoveto{4}{13}
\ssdropbull
\ssarrow[curve=0.1]{0}{-2}
\ssmoveto{6}{13}
\ssdropbull
\ssmoveto{3}{14}
\ssdropbull
\ssarrow[curve=0.1]{0}{-2}
\ssmoveto{8}{15}
\ssdropbull
\ssmoveto{5}{16}
\ssdropbull
\ssarrow[curve=0.1]{0}{-2}
\ssmoveto{7}{16}
\ssdropbull
\ssarrow[curve=0.1]{0}{-2}
\ssmoveto{9}{16}
\ssdropbull
\end{sseq}
\end{center}

In this plot we show the reduced Khovanov homology over $\Q$ of $T(4,5)$ and of $K$.  The discs correspond to generators of the homology of $T(4,5)$, the circles to generators of the homology of $K$.

The cobordism $\Sigma$ being oriented, it induces a map on the Khovanov homology which preserves the homological grading and lowers the quantum grading by $2$.  Hence the rank of this map is at most $6$, and we have drawn the $6$ possibly non-zero components of this map.

We split the cobordism $\Sigma$ into the composition of two cobordisms $\Sigma_1$ and $\Sigma_2$ where $\Sigma_1$ is a cobordism obtained by adding a $1$-handle to $T(4,5)$ to obtain a 2-component link $L$, and where $\Sigma_2$ is obtained by adding a 1-handle to $L$ to obtain the knot $K$.  We can think of $L$ as being obtained by taking the vertical smoothing of a crossing between the first two strands of a standard braid presentation of $T(4,5)$.  When we replace the crossing in question by the horizontal smoothing we obtain the trefoil knot.

So we can use Proposition \ref{exact triangle khovanov page} to see that there exists a movie presentation of $\Sigma$ inducing a morphism of Kronheimer-Mrowka spectral sequences that at the Khovanov page is the composition of two maps

\[ \Khr(T(4,5)) \stackrel{\Khr(\Sigma_1)}{\rightarrow} \Khr(L) \stackrel{\Khr(\Sigma_2)}{\rightarrow} \Khr(K) \]

\noindent each of which has cone equal to the reduced Khovanov homology of the trefoil.  The rank of the reduced Khovanov homology of the trefoil knot is $3$, hence if the rank of $Kh(L)$ is $6+2b$ then the ranks of the maps $\Khr(\Sigma_1)$ and $\Khr(\Sigma_2)$ are both $6+b$ since

\[3 = {\rm rank}({\rm Cone} (\Khr(\Sigma_i))) = 9 + 6 + 2b - 2{\rm rank}(\Khr(\Sigma_i)) {\rm .}\]

\noindent Hence the rank of $\Khr(\Sigma)$ is at least $(6+b) + (6+b) - (6+2b) = 6$, but we have already seen that the rank is at most 6.

So we see by Lemma \ref{algebraic lemma} that, since the generators at $(5,16)$ and $(7,16)$ are mapped non-trivially under $\Khr(\Sigma)$, they are never boundaries after the Khovanov page, and hence they survive the spectral sequence.  Thus the generator at $(9,16)$ is the target of some non-zero differential.
\end{proof}

Next we try to narrow down the possible generators from which the differential of the spectral sequence emerges.

\begin{lemma}
The generator at bigrading $(6,13)$ on the Khovanov page is an $\infty$-cycle.
\end{lemma}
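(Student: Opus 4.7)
The strategy is to deduce that $(6,13)$ is an $\infty$-cycle by realising it as the image of an $\infty$-cycle from a simpler spectral sequence, via Lemma \ref{algebraic lemma}(iii). Concretely, the plan is to find an oriented knot cobordism $\Sigma'$ whose source knot $K'$ has a Kronheimer-Mrowka spectral sequence that collapses after the Khovanov page, and such that the induced map $\Khr(\Sigma') : \Khr(K') \to \Khr(T(4,5))$ has a non-zero component landing at bigrading $(6,13)$. Since every class in $\Khr(K')$ is then trivially an $\infty$-cycle, and since $(6,13)$ is the unique Khovanov generator of $T(4,5)$ at its bigrading, the non-zero image would be a non-zero scalar multiple of $(6,13)$, forcing $(6,13)$ itself to be an $\infty$-cycle in the Kronheimer-Mrowka spectral sequence for $T(4,5)$.

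First I would seek an auxiliary knot $K'$ together with an oriented genus-$g$ knot cobordism $\Sigma' : K' \to T(4,5)$ (reading the time coordinate so that $K'$ is the initial end) satisfying two conditions: (a) the total rank of $\Khr(K')$ matches the sum of the absolute values of the coefficients of $\Delta_{K'}$, so that Proposition \ref{alex polynomial lower bound} forces the spectral sequence for $K'$ to collapse at the Khovanov page; and (b) $\Khr(K')$ contains a generator at bigrading $(6, 13+2g)$, so that the shift in quantum grading by $\chi(\Sigma')=-2g$ coming from an oriented genus-$g$ cobordism lands its image at $(6,13)$ in $\Khr(T(4,5))$. The natural candidates to try are the genus-$g$ surfaces obtained from $T(4,5)$ by performing one or several crossing changes distinct from the crossing used in the previous lemma, presented as a cylinder with self-intersections resolved by attaching handles.

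Second, to verify non-vanishing of the targeted bigraded component of $\Khr(\Sigma')$, I would decompose $\Sigma'$ as an alternating composition of $1$-handle cobordisms via unoriented skein triangles, apply Proposition \ref{exact triangle khovanov page} to organise the resulting maps into exact triangles at the Khovanov page, and perform a dimension count along these triangles, exactly as in the previous lemma. This should pin down the rank of the component of $\Khr(\Sigma')$ at the relevant bigrading and in particular show that it is non-zero.

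The main obstacle is the choice of $K'$. The knot used in the previous lemma does not suffice, because its circles in the earlier plot occur at $(i,j-i)$ bigradings shifted down by $2$ from the bullets of $\Khr(T(4,5))$, so no generator exists in $\Khr(K)$ that could be sent to the bullet $(6,13)$ under a genus-$1$ oriented cobordism in either direction. A different crossing change, for instance at a crossing between different strands of the braid presentation of $T(4,5)$, or else a higher-genus cobordism built from a sequence of crossing changes, ought to furnish the required $K'$ and $\Sigma'$; confirming this is most efficiently done by a direct (possibly computer-assisted) braid-theoretic calculation of $\Khr(K')$ together with the induced $\Khr(\Sigma')$. Once these conditions are in hand, Lemma \ref{algebraic lemma}(iii) closes the argument.
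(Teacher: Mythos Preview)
Your overall strategy---find a knot $K'$ whose Kronheimer--Mrowka spectral sequence collapses at the Khovanov page, together with a cobordism $K' \to T(4,5)$ whose induced map at the Khovanov page hits the generator at $(6,13)$, then invoke Lemma \ref{algebraic lemma}(iii)---is exactly the paper's strategy. The difference lies in the execution, and there the paper makes a move you have ruled out.

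You restrict to \emph{oriented} cobordisms built from crossing changes; these have $S \cdot S = 0$, hence bidegree $(0,-2g)$ on Khovanov homology, forcing $K'$ to carry a generator at homological degree $6$. You then leave the identification of $K'$ to an open-ended (possibly computer-assisted) search. The paper instead uses a \emph{non-orientable} cobordism: a single $1$-handle attachment (topologically a punctured M\"obius band) from the alternating knot $5_2$ to $T(4,5)$. Because here $S \cdot S \neq 0$, Proposition \ref{filtrations cobordism map} permits a large shift in homological degree---the map raises $i$ by $11$ and $j$ by $32$---so the source knot can be very small. Since $5_2$ is alternating, its spectral sequence collapses by Proposition \ref{knots with thin homology}; the relevant component of the map is non-zero because the cone (the third term in the unoriented skein triangle) has Khovanov homology of homological width $2$. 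This gives a short, fully explicit proof with no search required.

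So your proposal is not wrong in spirit, but it is incomplete (no $K'$ is actually produced) and self-handicapped by the orientability restriction. Allowing a single non-orientable $1$-handle is precisely what makes the paper's argument go through cleanly.
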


\begin{proof}
There is a cobordism topologically equivalent to a punctured Moebius band from the knot $5_2$ in Rolfsen's knot table to $T(4,5)$.  This is presented as a single 1-handle attachment in Figure \ref{5_2cob}.

\begin{figure}
\centerline{
{
\psfrag{thing1}{$= q^{1-n}$}
\psfrag{thing2}{$-q^{-n}$}
\psfrag{thing3}{$=q^{n-1}$}
\psfrag{thing4}{$-q^n$}
\psfrag{2}{$2$}
\psfrag{x4}{$x_4$}
\psfrag{T+(D)}{$T^+(D)$}
\includegraphics[height=4in,width=3in]{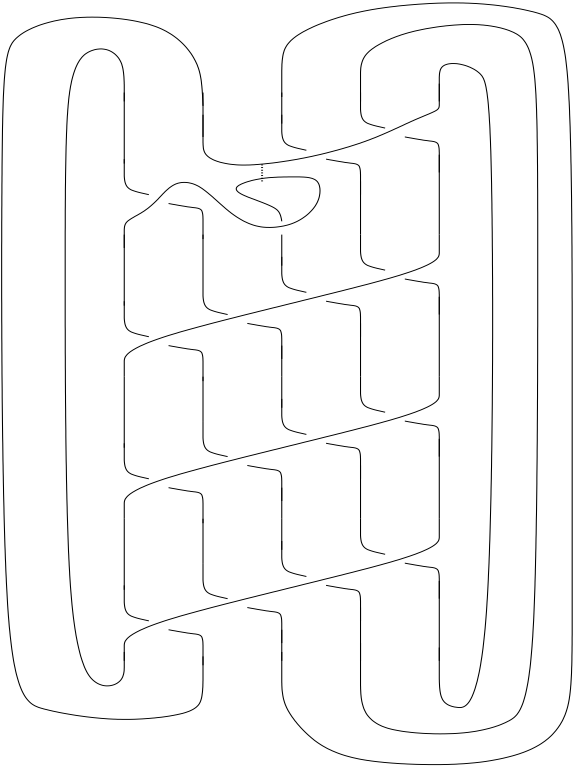}
}}
\caption{The dotted line represents a blackboard-framed 1-handle attachment which gives a knot cobordism from the knot $5_2$ to $T(4,5)$.}
\label{5_2cob}
\end{figure}

This 1-handle attachment induces a morphism of Kronheimer-Mrowka spectral sequences, which we are again able to compute explicitly on the Khovanov page.

On the Khovanov page the map raises the homological grading by $11$ and raises the quantum grading by $32$.  The possible non-zero components of this map are shown below.  In fact each component is non-zero, since the Khovanov homology of the cone (again, computed from the unoriented skein exact sequence) has homological width $2$.

\begin{center}
\begin{sseq}[grid=go, entrysize=4mm]{-5...9}{-8...16}
\ssmoveto{0}{11}
\ssdropbull
\ssmoveto{2}{13}
\ssdropbull
\ssmoveto{4}{13}
\ssdropbull
\ssmoveto{6}{13}
\ssdropbull
\ssmoveto{3}{14}
\ssdropbull
\ssmoveto{8}{15}
\ssdropbull
\ssmoveto{5}{16}
\ssdropbull
\ssmoveto{7}{16}
\ssdropbull
\ssmoveto{9}{16}
\ssdropbull
\ssmoveto{-5}{-8}
\ssdrop{\circle}
\ssarrow{11}{21}
\ssmoveto{-4}{-7}
\ssdrop{\circle}
\ssmoveto{-3}{-6}
\ssdrop{\circle}
\ssarrow{11}{21}
\ssmoveto{-2}{-5}
\ssdrop{\circle}
\ssmoveto{-2}{-5}
\ssdrop{\circle}
\ssarrow{11}{21}
\ssmoveto{-1}{-4}
\ssdrop{\circle}
\ssmoveto{0}{-3}
\ssdrop{\circle}
\end{sseq}
\end{center}

Since $5_2$ is alternating the spectral sequence has only trivial differentials past the Khovanov page.  This implies, again by Lemma \ref{algebraic lemma}, that the generator of the Khovanov homology of $T(4,5)$ that occurs at grading $(i,j-i) = (6,13)$ has to survive the spectral sequence (since it must be an $\infty$-cycle).
\end{proof}

Thus we have shown that there are only two remaining possibilities for the non-trivial differential in the spectral sequence, hence verifying Proposition \ref{45}.

\subsection{Three-stranded pretzel knots}
We will now apply our method to draw conclusions about the Floer homology of 3-stranded pretzel knots $P(p,q,r)$.  To avoid confusion we shall state this first about instanton homology and indicate at the end how the proof for Heegaard-Floer homology differs.

To avoid trivialities, we assume all of $p,q$ and $r$ are non-zero. Notice also that $P(p,q,r)$ is invariant under permutation of the numbers $p,q$ and $r$, and that reflection of $P(p,q,r)$ yields $P(-p,-q,-r)$. We will restrict ourselves to the cases where $P(p,q,r)$ is a knot, and this is so if and only if at most one of the numbers $p,q$ and $r$ is even.

 If the absolute value of one of the numbers $p,q$ and $r$ is $1$ then $P(p,q,r)$ is easily seen to be a 2-bridge link, hence alternating.  If all of $p,q,r$ are positive (or have the same sign), which we shall assume from now on, then the standard diagram of $P(p,q,r)$ is alternating.
 
By results of Greene \cite{Greene} and Champanerkar-Kofman \cite{CK} the 3-stranded pretzel links $P(-p,q,r)$ for $p,q,r \geq 2$ are {\em non quasi-alternating} if and only if $p \leq \min\{ q, r \}$.

Starkston has conjectured \cite{Starkston} and  Qazaqzeh \cite{Q} has shown that the Khovanov homology of $P(-p,q,r)$ is thin if $p = \min \{ q, r\}$, and Manion \cite{Manion} has proved that it is not thin if $ 2 \leq p < \min \{ q, r \}$. 

As a consequence of these results, the only 3-stranded pretzel knots which do not have their reduced instanton knot Floer homology determined by the  collapsing of the Kronheimer-Mrowka spectral sequence are the knots $P(-p,q,r)$ with $2 \leq p < \min \{q,r\}$. 

The following is a consequence of Manion's result \cite[Theorem 1.1]{Manion}.
\begin{prop}\label{Manion}
  Let $2 \leq p < \min\{q, r \}$. The reduced Khovanov homology of the pretzel knot $P(-p,q,r)$ is supported in two neighbouring $\delta$-gradings and is given by
  \begin{equation*}
  	\Khr(P(-p,q,r)) \cong \Q^{p^2-1} \oplus \Q^{(q-p)(r-p)-1} 
  \end{equation*}
  if all of $p,q,r$ are odd or only $q$ or $r$ is even, and is given by
    \begin{equation*}
  	\Khr(P(-p,q,r)) \cong \Q^{p^2} \oplus \Q^{(q-p)(r-p)} 
  \end{equation*}
  if $p$ is even. In both cases, the first summand denotes the reduced Khovanov homology of the upper $\delta$-grading and the second summand the one with the $\delta$-grading which is one lower. 
\end{prop}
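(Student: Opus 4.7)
The plan is to appeal to Manion's explicit description of $\Khr(P(-p,q,r))$ and to read off the rank on each of the two $\delta$-lines on which it is supported.

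First, I would invoke \cite[Theorem~1.1]{Manion}, whose content is precisely that $\Khr(P(-p,q,r))$ in the range $2\leq p<\min\{q,r\}$ is a direct sum of two bigraded blocks, each concentrated on a single $\delta$-line, with the two $\delta$-lines consecutive. This already produces the ``two-thin'' shape asserted in the proposition. Second, I would translate Manion's bidegree and $\delta$-grading conventions (where $\delta=j-2i$) into those used here, and in particular determine which of the two blocks is the upper $\delta$-line and which is the lower; this identifies the roles of the two summands in the displayed formulas.

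Third, I would tally the generators in each block. Manion's formula gives one block whose total rank is $p^2$, arising combinatorially from the $-p$ twist region, and a second block whose total rank is $(q-p)(r-p)$, arising from the $(q,r)$ twist regions, subject to a small correction depending on parity. When $p$ is odd (so that, by the knot condition, at most one of $q,r$ is even) a pair of extremal generators of the two blocks is identified under the passage to the reduced theory, dropping each of the two counts by one and giving $p^2-1$ and $(q-p)(r-p)-1$. When $p$ is even, no such identification occurs and the counts remain $p^2$ and $(q-p)(r-p)$. As a consistency check I would evaluate the Jones polynomial $V_{P(-p,q,r)}$ at $q=\sqrt{-1}$ as in the proof of Proposition~\ref{knots with thin homology}: under a two-thin support, the Jones evaluation together with the total rank determines the two ranks uniquely, so any convention error would be immediately visible.

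The main obstacle is entirely bookkeeping rather than mathematics: matching $\delta$-grading conventions to those used by Manion, correctly labelling which block is upper and which is lower, and handling the parity case split without error. No new geometric or algebraic input beyond Manion's theorem is required.
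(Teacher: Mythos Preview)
Your approach is the same as the paper's: the paper simply records this proposition as a direct consequence of \cite[Theorem~1.1]{Manion} and gives no further argument, so reading off the ranks on the two $\delta$-lines from Manion's explicit formulas is exactly what is intended.

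One small caution: your heuristic for the $-1$ correction in the odd-$p$ case (``a pair of extremal generators of the two blocks is identified under the passage to the reduced theory'') is not quite right, since Manion's theorem is already stated for the reduced homology and the parity distinction arises internally in his computation rather than from a reduced/unreduced comparison. This does not affect the validity of your plan, since you are ultimately just transcribing Manion's output; but if you want to explain the parity split, you should trace it through Manion's actual case analysis rather than invoke the reduced/unreduced passage.
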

Manion also makes precise the respective $\delta$-gradings, requiring further distinction of cases, but we do not need this here.  In fact, all we need is that any pretzel knot has reduced Khovanov homology supported in at most two adjacent $\delta$-gradings, which is easily proved by an induction.
\\

Before stating our theorem we prove a simple lemma. 
For any $n \geq 1$ the $T(2,2n)$ torus link is an alternating non-split two component link. Proposition \ref{knots with thin homology} does not immediately apply to conclude that the spectral sequence to $I^\natural(T(2,2n))$ is trivial even though it has thin homology because the excision isomorphism $I^\natural(K) \cong \KHI(K)$ is just stated for knots in \cite{KM_ss}.
\begin{lemma}
	The $T(2,2n)$ torus link has trivial Kronheimer-Mrowka spectral sequence from $\Khr(T(2,2n))$ to $I^\natural(T(2,2n))$. Both groups have total rank $2n$. 
\end{lemma}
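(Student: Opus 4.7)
Since $T(2,2n)$ is alternating with determinant $2n$, one has $\rk \Khr(T(2,2n)) = 2n$; the Kronheimer-Mrowka spectral sequence (Theorem~\ref{ss}) then immediately yields the upper bound $\rk I^\natural(T(2,2n)) \leq 2n$.

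For the matching lower bound, the plan is to apply the unoriented skein triangle (Proposition~\ref{triangle}) at any crossing of the standard alternating diagram of $T(2,2n)$. The two smoothings can be identified directly: the oriented smoothing fuses the two components to yield the torus knot $T(2, 2n-1)$, while the other smoothing produces the unknot $U$, since the horizontal bubble separates the diagram into two twisted horseshoe arcs whose internal full-twists may be removed by ambient isotopy. Since both $T(2, 2n-1)$ and $U$ are thin knots, Proposition~\ref{knots with thin homology} gives $\rk I^\natural(T(2, 2n-1)) = 2n-1$ and $\rk I^\natural(U) = 1$.

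By Proposition~\ref{exact triangle khovanov page}, the three cobordisms underlying the skein triangle induce a morphism of Kronheimer-Mrowka spectral sequences fitting into an exact triangle at every page. Denote by $a, b, c$ the ranks of the three maps at the Khovanov page and by $a', b', c'$ the corresponding ranks at $E_\infty$. The cyclic exact triangle identities expressing each rank $\rk E(K_i)$ as the sum of the ranks of its two incident maps, applied with the Khovanov ranks $2n-1, 1, 2n$, force $a=0$, $b=1$, $c=2n-1$. The rank of an induced map can only decrease across pages of a morphism of spectral sequences (since $f_{r+1}$ is induced on subquotients from $f_r$), giving $a' \leq a = 0$; combined with $a' + b' = \rk I^\natural(U) = 1$ this forces $b' = 1$, and from $a' + c' = \rk I^\natural(T(2,2n-1)) = 2n - 1$ we obtain $c' = 2n - 1$. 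Hence $\rk I^\natural(T(2,2n)) = b' + c' = 2n$, as desired; since Khovanov and $I^\natural$ have equal rank, the spectral sequence must collapse.

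The key technical point is the rank monotonicity across pages of a morphism of spectral sequences; once this is granted, the rest is a rank-chase in the cyclic exact triangle. A minor point to justify carefully is the identification of the non-oriented smoothing as the unknot, which amounts to recognizing that the two twisted horseshoes resulting from the smoothing are ambient isotopic to untwisted horseshoes.
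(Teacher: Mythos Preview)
Your computation of $\rk \Khr(T(2,2n)) = 2n$ via thinness of alternating links is a correct shortcut, and the upper bound $\rk I^\natural(T(2,2n)) \leq 2n$ follows.  The gap is in the lower bound.

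You assert that the three cobordism maps fit into an exact triangle \emph{at every page} of the spectral sequence, and in particular at $E_\infty$, so that the rank identities $a'+b' = \rk I^\natural(U)$ etc.\ hold there.  Proposition~\ref{exact triangle khovanov page} does not give this: it says only that (for one cobordism at a time) the induced morphism of spectral sequences agrees at the \emph{Khovanov page} with a map sitting in the Khovanov exact triangle.  Nothing forces the three $E_\infty$-maps to form an exact triangle.  Concretely, the $E_\infty$-map is the associated graded of the map on $I^\natural$, and the associated graded of an exact sequence of filtered vector spaces need not be exact.  So from $a' = 0$ you cannot conclude that the corresponding map $I^\natural(K_0) \to I^\natural(K_1)$ has rank $0$; a filtered map can have $\mathrm{gr}(f)=0$ while $f\neq 0$.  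Your single triangle therefore only yields $\rk I^\natural(T(2,2n)) \in \{2n-2,\,2n\}$, and the argument stalls.

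The paper closes this gap without any spectral-sequence morphism by using a \emph{second} skein triangle, this time resolving a crossing of $T(2,2n+1)$ to obtain $T(2,2n)$ and $U$.  Since $T(2,2n\pm 1)$ are alternating knots with trivial spectral sequences, one gets from the first triangle $\rk I^\natural(T(2,2n)) \in \{2n,\,2n+2\}$ and from the second $\rk I^\natural(T(2,2n)) \in \{2n-2,\,2n\}$; the intersection is $\{2n\}$.  (The same pair of triangles also pins down $\rk\Khr(T(2,2n))=2n$ if you prefer not to invoke Lee's theorem.)
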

\begin{proof}
	We use the exact triangle from Proposition \ref{triangle} twice. The torus knot $K=T(2,2n+1)$ in its standard diagram has a crossing such that the link $K_0$ resulting from the 0-resolution of that crossing is the torus link $T(2,2n)$, and the knot $K_1$ resulting from 1-resolution is the unknot $U$. For the ranks we have $\rk(\Khr(T(2,2n+1))) = 2n+1$ and $\rk(\Khr(U)) = 1$. Therefore, by the exact triangle the rank of $\Khr(T(2,2n))$ is either $2n+2$ or $2n$.
	
	The torus link $L=T(2,2n)$ has a crossing in its standard diagram such that the two resolutions are the torus knot $T(2,2n-1)$ and the unknot $U$, respectively. The exact triangle implies this time that the rank of $\Khr(T(2,2n))$ is either $2n$ or $2n-2$. Thus the rank of $\Khr(T(2,2n))$ is $2n$. 
	
	The torus knots $T(2,2n+1)$ are alternating, so have trivial spectral sequence. The exact triangle argument just above, but this time applied to reduced instanton homology, implies the claim.
\end{proof}

\begin{theorem}
	Let $2 \leq p < \min\{q, r \}$. Then for any filtration determined by numbers $a,b$ as above, the Kronheimer-Mrowka spectral sequence, starting from the reduced Khovanov homology $\Khr(P(-p,q,r))$ and abutting to the instanton knot Floer homology $I^\natural(P(-p,q,r))$, can only have non-trivial differentials that strictly lower the $\delta$-grading. 
	\end{theorem}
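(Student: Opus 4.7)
\emph{Plan.} By Proposition~\ref{Manion}, $\Khr(P(-p,q,r))$ is concentrated on two adjacent $\delta$-gradings $\delta_+ > \delta_-$. A non-trivial higher differential in the Kronheimer-Mrowka spectral sequence therefore either strictly lowers $\delta$ (upper-to-lower), preserves $\delta$ (upper-to-upper or lower-to-lower), or strictly raises $\delta$ (lower-to-upper); the theorem claims only the first case is realized, so we must rule out upper-to-upper, lower-to-lower, and lower-to-upper differentials.

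My plan is to follow the cobordism strategy of Proposition~\ref{45}. For each upper-$\delta$ generator $x$ of $\Khr(P(-p,q,r))$, I would construct an outgoing cobordism $P(-p,q,r) \to K^{\mathrm{out}}_x$ to a Khovanov-thin knot (whose spectral sequence collapses at the Khovanov page by Proposition~\ref{knots with thin homology}) whose induced Khovanov map sends $x$ to a non-zero class. Combining Proposition~\ref{cobordism theorem} with Lemma~\ref{algebraic lemma}(ii), the absence of non-trivial boundaries on subsequent pages in the target forces $x$ to be a non-boundary on every page of the spectral sequence for $P(-p,q,r)$, ruling out upper-to-upper and lower-to-upper differentials. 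Dually, for each lower-$\delta$ generator $y$ I would construct an incoming cobordism $K^{\mathrm{in}}_y \to P(-p,q,r)$ from a Khovanov-thin knot whose induced Khovanov map has $y$ in its image; by Lemma~\ref{algebraic lemma}(iii), $y$ is then an $\infty$-cycle, ruling out lower-to-lower differentials.

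The natural cobordism candidates are assembled from unoriented skein resolutions at crossings in the $-p$, $q$, or $r$ twist regions (Proposition~\ref{triangle}), each realised as a $1$-handle movie to which Proposition~\ref{cobordism theorem} applies; the associated exact triangle on the Khovanov page from Proposition~\ref{exact triangle khovanov page} should allow an inductive analysis on $\min\{p,\,q-p,\,r-p\}$, with base cases being the Khovanov-thin pretzel knots $P(-p,q,r)$ with $p = \min\{q,r\}$ (by Proposition~\ref{Manion} combined with Proposition~\ref{knots with thin homology}) and the $2$-bridge knots obtained by setting a twist parameter to $\pm 1$.

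The main obstacle is the explicit construction of these cobordisms together with the verification that they detect every required generator. The grading shifts supplied by Proposition~\ref{filtrations cobordism map}, combined with the explicit bigradings in Manion's description, control which generators can fall in the image of a given cobordism map, and the inductive step must confirm this detection property at each stage. If full detection cannot be achieved, the theorem still admits non-trivial upper-to-lower differentials, and a supplementary rank count using Proposition~\ref{alex polynomial lower bound} could be employed to fill any residual gaps.
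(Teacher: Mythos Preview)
Your plan is essentially the paper's approach: build $1$-handle cobordisms via unoriented skein moves to Khovanov-thin pretzel knots, then apply Lemma~\ref{algebraic lemma} in one direction to show the lower diagonal consists of $\infty$-cycles and in the other direction to show the upper diagonal is never hit by boundaries. You also correctly identified both relevant thin base cases, $P(-1,q,r)$ (2-bridge) and $P(-q,q,r)$ (Qazaqzeh).

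Where your outline is vaguer than the paper is in the ``main obstacle'' you flag. The paper resolves it cleanly with a single skein triangle rather than generator-by-generator cobordisms: resolving a crossing in the $(-p)$-twist region gives the triple $P(-(p-1),q,r)$, $P(-p,q,r)$, $T(2,q+r)$, and the point is that since $\Khr(T(2,q+r))$ is thin, exactness forces the map $\psi:\Khr(P(-(p-1),q,r))\to\Khr(P(-p,q,r))$ to be \emph{surjective onto the entire lower diagonal} and \emph{injective on the entire upper diagonal} simultaneously. So one does not need separate cobordisms $K^{\mathrm{in}}_y$, $K^{\mathrm{out}}_x$ for each generator; a single $\psi$ handles all generators on a diagonal at once. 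The lower-diagonal claim then follows by upward induction on $p$ from $p=2$, and the upper-diagonal claim by composing the $\psi$'s from $p$ up to $q$ into a single map $\phi:\Khr(P(-p,q,r))\to\Khr(P(-q,q,r))$ that is injective on the upper diagonal. Your proposed induction parameter $\min\{p,\,q-p,\,r-p\}$ is not quite the right one; the two halves of the argument run in opposite directions along $p$. The even-$q$ case is then reduced to the odd case by one further skein triangle involving $T(2,r-p)$.
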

\begin{proof}

For the time being, we assume $q, r$ are odd numbers, and without loss of generality we can assume $q \leq r$. For any $p \geq 2$, 
the pretzel knots $P(-p,q,r)$, $P(-(p-1),q,r)$ and the torus link $T(2,q+r)$ are related by a skein triangle. By Proposition \ref{exact triangle khovanov page} there is a morphism of spectral sequences $\Psi$ from the Kronheimer-Mrowka spectral sequence for $P(-(p-1),q,r)$ to that of $P(-p,q,r)$ such that at the Khovanov page the morphism $\psi$ fits into an exact triangle:

\begin{equation} \label{triangle pretzels}
\begin{tikzcd}
 \Khr(P(-(p-1),q,r)) \arrow{rr}{\psi} & & \Khr(P(-p,q,r)) \arrow{ddl} \\ & & \\
	& \Khr(T(2,q+r))  . \arrow{uul}
\end{tikzcd}
\end{equation}

We will argue by induction in two different steps. In a first step, we show that all elements of the diagonal of the lower $\delta$-grading of $\Khr(P(-p,q,r))$
are $\infty$-cycles i.e. at any page no non-trivial differential of the spectral sequence emerges from this diagonal. In a second step we show that, at any page $E_k$ not before the Khovanov page, no element of the upper diagonal is a $k$-boundary.
\\

To start the induction of the first step, consider the case $p=2$. The knot $P(-1,q,r)$ is a 2-bridge knot and so its reduced Khovanov homology is thin. Likewise, the reduced Khovanov homology of the torus link $T(2,q+r)$ is thin. Therefore, the morphism $\psi$ maps {\em onto} one of the diagonals of $\Khr(P(-2,q,r))$. One can either check the grading shifts explicitly or argue by Manion's result that in fact $\psi$ maps onto the diagonal with the lower $\delta$-grading.

Now $P(-1,q,r)$ has trivial Kronheimer-Mrowka spectral sequence by Proposition \ref{knots with thin homology}, so there is no non-trivial differential, and, in particular, every element of $\Khr(P(-1,q,r))$ is an $\infty$-cycle. By Lemma \ref{algebraic lemma} and Proposition \ref{exact triangle khovanov page}, every element in the diagonal of the lower $\delta$-grading of $\Khr(P(-2,q,r))$ also is an $\infty$-cycle.

In the induction step for $3 \leq p \leq \min\{q,r\}$, the exact triangle (\ref{triangle pretzels}) will have two Khovanov homology groups with support in two diagonals, but the argument is similar:  The map $\psi$ has to map the diagonal with the lower $\delta$-grading of $\Khr(P(-(p-1),q,r))$ onto the diagonal with the lower $\delta$-grading of $\Khr(P(-p,q,r))$, for otherwise we obtain a contradiction to the fact that the Khovanov homology of the torus link $T(2,q+r)$ is thin. If by induction hypothesis every element of the lower diagonal of $\Khr(P(-(p-1),q,r))$ is an $\infty$-cycle, Lemma \ref{algebraic lemma} and Proposition \ref{exact triangle khovanov page} imply again that this also holds for the lower diagonal of $\Khr(P(-p,q,r))$.
\\

To start the induction of the second step, recall that $P(-q,q,r)$ has thin Khovanov homology by Qazaqzeh's result \cite{Q}. As a consequence, it has trivial Kron\-hei\-mer-Mrowka spectral sequence by Proposition \ref{knots with thin homology}. We apply Proposition \ref{exact triangle khovanov page} such that we obtain the exact triangle (\ref{triangle pretzels}) for $p=q$. The only possibility consistent with the fact that $P(-q,q,r)$ and $T(2,q+r)$ have thin Khovanov homology is that the morphism $\psi$ maps the diagonal with the {\em upper} $\delta$-grading of $P(-(q-1),q,r)$ injectively into the diagonal of $P(-q,q,r)$. By the same argument, but for $p < q$, we see that the only consistent possibility is that $\psi$ maps the diagonal with the upper $\delta$-grading of $P(-(p-1),q,r)$ injectively into the {\em upper} diagonal of $P(-p,q,r)$. In both cases, what we need is that $\psi$ will be an injective map.

By composition of these various maps $\psi$ we get a map

\[ \phi : \Khr(P(-p,q,r)) \rightarrow \Khr(P(-q,q,r)) \]

\noindent which is injective when restricted to the upper diagonal of $\Khr(P(-p,q,r))$ and by Proposition \ref{exact triangle khovanov page} is the induced map at the Khovanov page of a morphism between the Kronheimer-Mrowka spectral sequences for the knots $P(-p,q,r)$ and $P(-q,q,r)$.

Suppose the $a^{\rm th}$ page is the Khovanov page, and let $x \in \Khr(P(-p,q,r)) = E_a$ be an $a$-cycle such that each $[x]_s$ is an $s+1$-cycle for $a \leq s \leq b-1$ and suppose $x$ lies in the upper $\delta$-grading diagonal of $\Khr(P(-p,q,r))$.  Then if $[x]_b$ is a $b$-boundary we have that

\[ \phi_b [x]_b = [\phi (x)]_b \]

\noindent is also a $b$-boundary by Lemma \ref{algebraic lemma}.  Hence $\phi(x) = 0$ because the spectral sequence for $P(-q,q,r)$ collapses at the Khovanov page, hence $x = 0$ since $\phi$ is injective when restricted to the upper diagonal.

So far we have proved the theorem for all cases where both $q$ and $r$ are odd numbers. Assume now without loss of generality that $q$ is even and $p$ and $r$ are both odd.

The pretzel knots $P(-p,q,r)$, $P(-p,q+1,r)$ and the torus link $T(2,r-p)$ also form an exact triangle to which we apply Proposition \ref{exact triangle khovanov page} and Lemma \ref{algebraic lemma} another time.  There is a morphism of spectral sequences $\Psi$ from the Kronheimer-Mrowka spectral sequence of $P(-p,q+1,r)$ to that of $P(-p,q,r)$ such that the morphism $\psi$ at the Khovanov page fits into the exact triangle
\begin{equation*}
\begin{tikzcd}
 \Khr(P(-p,q+1,r)) \arrow{rr}{\psi} & & \Khr(P(-p,q,r)) \arrow{ddl} \\ & & \\
	& \Khr(T(2,r-p))  . \arrow{uul}
\end{tikzcd}
\end{equation*}
Again, this morphism has to map the diagonal with the lower $\delta$-grading of the group $\Khr(-p, q+1, r)$ onto the lower diagonal of $\Khr(P(-p,q,r))$, and 
we can use the theorem for the pretzel knot $P(-p,q+1,r)$ to draw the conclusion that there is no non-trivial differential when restricted to the diagonal with the lower $\delta$-grading, at any page of the Kronheimer-Mrowka spectral sequence for $P(-p,q,r)$.

Similarly, the pretzel knots $P(-p,q,r$, $P(-p,q-1,r)$ and the torus link $T(2,r-p)$ also form an exact triangle to which we apply Proposition \ref{exact triangle khovanov page}.  There is a morphism of spectral sequences $\Psi$ from the Kronheimer-Mrowka spectral sequence of $P(-p,q,r)$ to that of $P(-p,q-1,r)$ such that the morphism $\psi$ at the Khovanov page fits into the exact triangle
\begin{equation*}
\begin{tikzcd}
 \Khr(P(-p,q,r)) \arrow{rr}{\psi} & & \Khr(P(-p,q-1,r)) \arrow{ddl} \\ & & \\
	& \Khr(T(2,r-p))  . \arrow{uul}
\end{tikzcd}
\end{equation*}
Again, this morphism has to map the diagonal with the upper $\delta$-grading of the group $\Khr(-p,q,r)$ injectively into the upper one of $\Khr(P(-p, q-1, r))$. Using the same method as before, we can use the theorem for the pretzel knot $P(-p,q-1,r)$ to draw the conclusion that no element of the diagonal with the upper $\delta$-grading lies in the image of a differential, at any page of the Kronheimer-Mrowka spectral sequence for $P(-p,q,r)$. 
\end{proof}

\begin{theorem}
Let $2 \leq p < \min\{q, r \}$. Then the Ozsv\'ath-Szab\'o spectral sequence, starting from the reduced Khovanov homology $\Khr(P(-p,q,r))$ and abutting to the Heegaard-Floer homology of the branched double cover of the mirror $\HF(\Sigma(K))$, can only have non-trivial differentials that strictly lower the $\delta$-grading. 
\end{theorem}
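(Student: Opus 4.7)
The plan is to reproduce verbatim the inductive argument used in the preceding theorem for the Kronheimer--Mrowka spectral sequence, with only two ingredients replaced by their Heegaard--Floer analogues. The two ingredients needed are (a) an exact triangle of Ozsv\'ath--Szab\'o spectral sequences induced at the Khovanov page by the unoriented skein cobordisms, and (b) the collapse of the Ozsv\'ath--Szab\'o spectral sequence for knots with thin Khovanov homology. Ingredient (a) is exactly the statement of the proposition following Proposition \ref{cobordism theorem HF}, and ingredient (b) holds because thin Khovanov homology forces $\Sigma(K)$ to be an $L$-space (as remarked earlier in the paper), so the rank of $\HF(\Sigma(K))$ equals the rank of $\Khr(K)$ and no higher differential can exist.

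With these in hand, I would proceed by the same two-step induction on $p$. In the first step, starting from $p=2$ where $P(-1,q,r)$ is $2$-bridge and hence has trivial Ozsv\'ath--Szab\'o spectral sequence, I would apply the exact triangle
\[
\Khr(P(-(p-1),q,r)) \xrightarrow{\psi} \Khr(P(-p,q,r)) \to \Khr(T(2,q+r)) \to
\]
together with the thinness of $\Khr(T(2,q+r))$ (implying $\psi$ is surjective onto the lower $\delta$-diagonal) and Lemma \ref{algebraic lemma} to conclude inductively that every element of the lower $\delta$-diagonal of $\Khr(P(-p,q,r))$ is an $\infty$-cycle for the Ozsv\'ath--Szab\'o spectral sequence. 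In the second step, anchoring at $p=q$ where $P(-q,q,r)$ is thin (Qazaqzeh's theorem) and hence has trivial Ozsv\'ath--Szab\'o spectral sequence, the thinness of $\Khr(T(2,q+r))$ forces the composed map $\phi$ to be injective on the upper $\delta$-diagonal; then Lemma \ref{algebraic lemma} applied to $\phi$ shows that no element of the upper diagonal can become a $b$-boundary at any page.

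The case when $q$ (or $r$) is even is handled exactly as before, by invoking the skein triangle relating $P(-p,q,r)$, $P(-p,q\pm 1, r)$, and $T(2,r-p)$, and using the already-proven odd-$q$-odd-$r$ case for the auxiliary pretzel knot.

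I do not expect any genuinely new obstacle: Lemma \ref{algebraic lemma} is purely algebraic and therefore transfers without change, the cobordism-to-spectral-sequence naturality at the Khovanov page is asserted by the Heegaard--Floer version of Proposition \ref{exact triangle khovanov page}, and the collapse for thin knots follows from the $L$-space identity of ranks. The only minor care needed is to verify that the exact triangle in $\HF$ of branched double covers, at the Khovanov page, uses the \emph{same} cone-rank bookkeeping as in the instanton case so that the surjectivity/injectivity of $\psi$ onto the appropriate $\delta$-diagonal is identical. This is immediate from the structure of the skein triangle cobordism and the fact that only the target Floer theory has changed, not the Khovanov-page combinatorics.
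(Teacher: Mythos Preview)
Your plan matches the paper's approach almost exactly---the paper's proof is essentially one sentence saying ``do the same induction as before''---but you have missed the one substantive point the paper does address, and it is precisely the point you dismiss at the end.

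You write that ``only the target Floer theory has changed, not the Khovanov-page combinatorics.'' This is not automatic. The Ozsv\'ath--Szab\'o spectral sequence lives over $\Z/2$, whereas the instanton argument was run over $\Q$, and Manion's computation (Proposition~\ref{Manion}) is stated rationally. If the integral reduced Khovanov homology of $P(-p,q,r)$ had $2$-torsion, then over $\Z/2$ it could fail to be supported on two adjacent $\delta$-diagonals, and the rank bookkeeping that forces $\psi$ to be surjective onto the lower diagonal (respectively injective on the upper one) could break down. The entire inductive mechanism rests on that surjectivity/injectivity, so this is not a cosmetic issue.

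The paper closes this gap by invoking Manion's further result that the integral reduced Khovanov homology of these pretzel knots is torsion-free, so that the $\Z/2$ and $\Q$ computations agree and the diagonal structure is preserved. Once you add that sentence, your proof is complete and coincides with the paper's.
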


\begin{proof}
The only substantial difference is that we are now working over the 2-element field $\Z/2$.  With these coefficients one may be worried that the reduced Khovanov homology of a pretzel link may not be supported in two adjacent $\delta$-gradings, but this turns out not to be the case, for example by appealing to Manion's result \cite{Manion} in which he proved that the reduced Khovanov homology of a pretzel knot over $\Z$ is torsion-free.
\end{proof}

\begin{remark}
We have observed above that for the torus knot $T(4,5)$ the same conclusion holds: all possible non-zero differentials in the Kronheimer-Mrowka spectral sequence strictly lower the $\delta$-grading.
\end{remark}

Based on these results we state the following conjecture.

\begin{conjecture}
For any knot, all non-trivial differentials in the Kronheimer-Mrowka spectral sequence strictly lower the $\delta$-grading.
\end{conjecture}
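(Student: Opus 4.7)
My plan is to extend the skein-triangle technique used in the preceding theorem for pretzel knots to arbitrary knots, organised as an induction on crossing number. The key structural observation is that Theorem \ref{instanton cube}(ii) says $d_\natural - d_{\Khr}$ has bifiltration order $\geq (1,2)$, so under the convention $\delta = j - 2i$ only components of bidegree exactly $(s, 2s)$ could in principle induce $\delta$-preserving differentials on the pages $E_r$. The conjecture is therefore equivalent to the vanishing of all such $\delta$-preserving contributions, and the cobordism approach of the paper looks like the most promising route.

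First I would fix a knot $K$ and a crossing of a diagram for $K$; the two smoothings give links $K_0$ and $K_1$ of strictly smaller crossing number. The unoriented skein triangle combined with Proposition \ref{exact triangle khovanov page} produces spectral-sequence morphisms, induced by $1$-handle cobordisms, between the Kronheimer-Mrowka spectral sequences for $K$, $K_0$, $K_1$, fitting into an exact triangle at the Khovanov page. The base cases of the induction are the $\delta$-thin knots, where Proposition \ref{knots with thin homology} makes the conjecture vacuous. To make the induction run one would also have to formulate an appropriate version of the conjecture for links and to extend the Alexander-polynomial lower bound of Proposition \ref{alex polynomial lower bound} beyond knots; this is a technical extension of existing results but does not obviously present a conceptual obstacle.

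Next, given a putative $\delta$-preserving $d_r$ on $K$, I would attempt to contradict the inductive hypothesis by producing a $\delta$-preserving differential on $K_0$ or $K_1$. Given $x \in E_r(K)$ of $\delta$-grading $d$ with $d_r(x) \neq 0$ on the same $\delta$-grading, one would chase $x$ through the triangle using Lemma \ref{algebraic lemma}, exploiting the fact that the Khovanov-page maps are filtered of known degree by Proposition \ref{filtrations cobordism map}. In the most favourable cases one of the three links in the triangle is alternating or $\delta$-thin, forcing its spectral sequence to collapse and pinning down $x$; this is essentially the mechanism already exploited in the $T(4,5)$ and pretzel arguments.

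The main obstacle, and the reason I expect this to be genuinely hard, is that the three spectral sequences around a skein triangle are only known to be linked by an exact triangle at the Khovanov page, not at higher pages $E_r$ with $r \geq 3$. Consequently, a $\delta$-preserving element in $E_r(K)$ need not correspond cleanly to one in $E_r(K_0)$ or $E_r(K_1)$, and the induction breaks unless one can either (a) upgrade the naturality of the Kronheimer-Mrowka spectral sequence to movies of arbitrary cobordisms, closing the open functoriality question flagged in the introduction and thereby giving cobordisms of low genus from $K$ to $\delta$-thin knots along which to transport the $\delta$-grading statement, or (b) analyse directly the bidegree-$(s,2s)$ components of $d_\natural - d_{\Khr}$ in the Kronheimer-Mrowka construction and show they are always chain-homotopic to zero. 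Absent either, the cobordism method seems to establish the conjecture only for knots admitting short chains of skein triangles connecting them to $\delta$-thin knots in a bigrading-compatible way, which is exactly the scope of the pretzel theorem above.
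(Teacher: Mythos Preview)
The statement you are addressing is labelled a \emph{Conjecture} in the paper, not a theorem; the paper offers no proof and presents it explicitly as an open problem motivated by the pretzel and $T(4,5)$ computations. There is therefore nothing to compare your proposal against.

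Your write-up is also not a proof but a programme, and you yourself identify the gap correctly in your final paragraph: the exact triangle is only available at the Khovanov page, so an induction on crossing number via skein triangles does not propagate information about higher differentials $d_r$ for $r$ beyond the Khovanov page. Concretely, given a $\delta$-preserving $d_r$ on $E_r(K)$, Lemma \ref{algebraic lemma} lets you push cycles and boundaries along a single spectral-sequence morphism, but it gives you no exactness statement at $E_r$, so you cannot conclude that a nonzero class on $E_r(K)$ must come from, or map nontrivially to, $E_r(K_0)$ or $E_r(K_1)$. Your option (a) would require the full functoriality left open in \cite{KM_filtrations}, and your option (b) amounts to proving the conjecture directly from the construction of $d_\natural$; neither is carried out. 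The honest summary is that your proposal reproduces the heuristic evidence already in the paper and correctly diagnoses why the method stops at the pretzel class, but it does not advance beyond that.
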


\begin{prop}
	The suite of pretzel knots $P(-2,3,2n+1)$ all have trivial Kron\-hei\-mer-Mrowka spectral sequence.
\end{prop}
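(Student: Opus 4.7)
The plan is to sandwich $\rk I^\natural(P(-2,3,2n+1))$ between matching upper and lower bounds of $2n+3$, which will force the Kronheimer-Mrowka spectral sequence to collapse.  The case $n=0$ is already trivial since $P(-2,3,1) = T(2,5)$ is alternating and Proposition~\ref{knots with thin homology} applies directly; I will therefore assume $n\geq 1$ from here on.

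The upper bound is immediate from Proposition~\ref{Manion}: we have $\rk \Khr(P(-2,3,2n+1)) = 4 + (2n-1) = 2n+3$, and the existence of the spectral sequence gives $\rk I^\natural \leq \rk \Khr$.  For the matching lower bound I invoke Proposition~\ref{alex polynomial lower bound}, which reduces matters to showing that the Alexander polynomial $\Delta_{P(-2,3,2n+1)}$ satisfies $\sum_i |a_i| = 2n+3$.

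To establish this I would run a coupled induction on $n$ using the Conway skein relation at a crossing in the third twist region of $P(-2,3,2n+1)$.  Writing $A_n := \Delta_{P(-2,3,2n+1)}$ and $B_n := \Delta_{P(-2,3,2n)}$ (the latter a two-component link, by the parity of the pretzel entries), the skein relation yields the coupled recursion
\begin{align*}
A_n &= A_{n-1} + (t^{1/2}-t^{-1/2})\, B_n, \\
B_n &= B_{n-1} + (t^{1/2}-t^{-1/2})\, A_{n-1}.
\end{align*}
With the base cases $A_0 = \Delta_{T(2,5)} = t^2-t+1-t^{-1}+t^{-2}$ and $B_0 = \Delta_{P(-2,3,0)} = (t^{1/2}-t^{-1/2})(t-1+t^{-1})$ (the latter computed directly by recognising $P(-2,3,0)$ as a trefoil linked once with a meridional unknot), a straightforward induction then verifies that $A_n$ is supported on exactly the $2n+3$ integer powers $t^k$ with $|k|\leq n+2$ and $k\neq \pm n$, with every nonzero coefficient equal to $\pm 1$.

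Putting the bounds together then gives $\rk I^\natural(P(-2,3,2n+1)) = 2n+3 = \rk\Khr(P(-2,3,2n+1))$, so no non-trivial differential can appear after the Khovanov page.  The main obstacle is purely the inductive bookkeeping --- tracking both sequences $A_n$ and $B_n$ simultaneously and verifying that the claimed sparsity-and-sign pattern for $A_n$ propagates through the recursion --- but no conceptual difficulty is hidden there.
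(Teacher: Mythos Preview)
Your approach is exactly the paper's: sandwich $\rk I^\natural(P(-2,3,2n+1))$ between the Khovanov upper bound $2n+3$ (from Manion) and the Alexander lower bound $2n+3$ (from Proposition~\ref{alex polynomial lower bound}), forcing collapse.  The only difference is that the paper simply asserts $\sum_i |a_i(\Delta)| = 2n+3$, whereas you sketch a derivation via the Conway skein recursion; this is additional detail rather than a different idea.
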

\begin{proof}
The sum of the absolute values of the coefficients of the Alexander polynomial of $P(-2,3,2n+1)$ is equal to $2n+3$. Therefore, $I^\natural(P(-2,3,2n+1))$ has rank bounded below by $2n+3$ by Proposition \ref{alex polynomial lower bound}. On the other hand, Manion's result says that the rank of $\Khr(P(-2,3,2n+1))$ is also equal to $2n+3$. Hence, the Kronheimer-Mrowka spectral sequence is trivial.
\end{proof}

It is not the case that these knots $P(-2,3,2n+1)$ have trivial Ozsv\'ath-Szab\'o spectral sequence.  In fact, in Proposition \ref{Baldwin_turbo_charged} we determine explicitly the Ozsv\'ath-Szab\'o spectral sequences for these knots.

\subsection{The $(-2,3,2n+1)$ pretzel knots and the Ozsv\'ath-Szab\'o spectral sequence.}

We have seen earlier that the Kronheimer-Mrowka spectral sequence collapses at the Khovanov page for all pretzel knots $P(-2,3,2n+1)$.  This however is not the case for the Ozsv\'ath-Szab\'o spectral sequence.

In this subsection we work over $\Z/2$.  In \cite{Baldwin}, Baldwin considered the pretzel knot $P(-2,3,5)$ and determined the pages of the Ozsv\'ath-Szab\'o spectral sequence from the reduced Khovanov homology of $P(-2,3,5)$ to the Heegaard-Floer homology of the branched double cover (which in this case is the Poincar\'e homology 3-sphere).

The reduced Khovanov homology of $P(-2,3,5)$ is given by

\[ \Khr(P(-2,3,5)) = t^0q^8 + t^2q^{12} + t^3q^{14} + t^4q^{14} + t^5q^{18} + t^6q^{18} + t^7q^{20} {\rm , }\]

\noindent where we have been cavalier about the distinction between the homology groups and the Poincare polynomial (we shall continue to be cavalier). Baldwin showed that $h^0q^8$ survives the spectral sequence and the remaining six elements cancel in pairs:

\[ (t^2q^{12}, t^4q^{14}), (t^5 q^{18}, t^7 q^{20}), (t^3 q^{14}, t^6 q^{18}) {\rm ,} \]

\noindent where the first two pairs cancel from the $E_2$ page to the $E_3$ page, and the third pair cancel from the $E_3$ to the $E_4$ page.

Manion's result \cite{Manion} implies that the reduced Khovanov homology of $P(-2,3,2n+1)$ is supported in two adjacent delta gradings (where $\delta$ is defined as half the quantum grading minus the homological grading).  It has rank $2n-1$ in delta grading $\delta = n+1$ and rank $4$ in delta grading $\delta = n+2$. In fact, we can write

\begin{equation*}
\begin{split}
 \Khr(P(-2,3,2n+1)) = & q^{2n-4} \Khr(P(-2,3,5)) \\ & +  t^8q^{2n + 18}(1 + tq^2 + (tq^2)^2 + \cdots + (tq^2)^{2n-5}) {\rm ,} 
\end{split}
\end{equation*}
\noindent where $n \geq 3$. We write this as

\[ \Khr(P(-2,3,2n+1)) = q^{2n-4} \Khr(P(-2,3,5)) \oplus T_n {\rm ,} \]

\noindent where $T_n$ stands for \emph{Tail}.  We note that for degree reasons each bihomogenous element of $\Khr(P(-2,3,2n+1))$ lies either in $q^{2n-4} \Khr(P(-2,3,5))$ or in $T_n$.

\begin{proposition}
\label{Baldwin_turbo_charged}
The Ozsv\'ath-Szab\'o spectral sequence for the knot $P(-2,3,2n+1)$ is obtained by shifting the spectral sequence for $P(-2,3,5)$ by $q^{2n-4}$ and taking the direct sum with a trivial spectral sequence given by $T_n = E_2 = E_\infty$.
\end{proposition}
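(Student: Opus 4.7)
The plan is to induct on $n$, with base case $n = 2$ given by Baldwin's explicit computation for $P(-2,3,5)$ (where the tail $T_2$ is empty and the claim is tautological). For the inductive step, the strategy combines the $\delta$-grading obstruction from the previous theorem with a morphism of Ozsv\'ath-Szab\'o spectral sequences coming from a skein triangle relating $P(-2,3,2n+1)$ and $P(-2,3,2n-1)$.

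The key grading observation is that, by Manion's decomposition, every generator of $T_n$ lies in the lower $\delta$-grading $\delta = n+1$, while all four generators in $\delta = n+2$ lie in the shifted summand $q^{2n-4}\Khr(P(-2,3,5))$. Since we have shown above that every non-trivial differential in the Ozsv\'ath-Szab\'o spectral sequence of a pretzel knot strictly lowers the $\delta$-grading, every non-trivial differential must emanate from one of the four generators in $\delta = n+2$ (all in the shifted summand) and target a generator in $\delta = n+1$. Thus it suffices to rule out the possibility that some such differential hits a generator of $T_n$.

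To rule this out, I would resolve a crossing in the $(2n+1)$-twist region of the standard diagram of $P(-2,3,2n+1)$; the skein triangle's other two vertices are $P(-2,3,2n-1)$ (after untwisting the resulting pair of cancelling crossings) and a third link $L_n$ whose Khovanov homology can be computed directly. Proposition \ref{cobordism theorem HF} provides a morphism $\Psi$ of Ozsv\'ath-Szab\'o spectral sequences between that of $P(-2,3,2n+1)$ and that of $P(-2,3,2n-1)$, whose mapping cone at the Khovanov page is $\Khr(L_n)$. Tracking the bigrading shifts prescribed by Proposition \ref{filtrations cobordism map}, the map induced by $\Psi$ at the Khovanov page sends the shifted summand of one knot to that of the other (up to the expected overall quantum shift) and restricts to a map $T_n \to T_{n-1}$ which is either injective or surjective with one-dimensional complement absorbed into $\Khr(L_n)$. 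By the inductive hypothesis the spectral sequence of $P(-2,3,2n-1)$ splits as the shift of Baldwin's spectral sequence together with a trivial summand on $T_{n-1}$. Lemma \ref{algebraic lemma} then transfers Baldwin's three cancelling pairs to the shifted summand of $P(-2,3,2n+1)$ and forces every generator of $T_n$ lying in the domain or codomain of $T_n \to T_{n-1}$ to be an $\infty$-cycle; the single remaining generator, not covered by this map, is handled by the exactness of the skein triangle at the Khovanov page together with the $\delta$-grading constraint already established.

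The main obstacle is the explicit identification of $L_n$ and the verification that the $1$-handle map respects the shifted-plus-tail decomposition in the expected block-triangular fashion. Both steps are routine but require careful bookkeeping of the Khovanov cube map associated with a single saddle, using the grading estimates of Proposition \ref{filtrations cobordism map}.
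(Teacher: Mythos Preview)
Your inductive framework is reasonable for transferring Baldwin's three cancelling pairs, and in that part it is essentially a step-by-step version of the paper's direct argument (the paper uses a single composite $1$-handle cobordism from $P(-2,3,2n+1)$ all the way down to $P(-2,3,5)$ and the same commutative-square reasoning).  One minor correction: a single crossing resolution in the $(2n+1)$-twist region does not yield $P(-2,3,2n-1)$ as a vertex of the skein triangle; the other smoothing is the $2$-component link $P(-2,3,2n)$.  You need two $1$-handles to reach $P(-2,3,2n-1)$, so the ``third link $L_n$'' in your outline is not what you think it is.  This is repairable --- just compose two skein maps --- and after repair your $\Psi$ coincides with one step of the paper's map.

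The genuine gap is the survival of the generator $t^0q^{2n+4}$.  After the three Baldwin pairs cancel, one is left at $E_4$ with $t^0q^{2n+4}\oplus T_n$, and for every $8\le r\le 2n+3$ there is a generator of $T_n$ sitting exactly at the bidegree targeted by $d_r(t^0q^{2n+4})$.  Your morphism $\Psi$ to $P(-2,3,2n-1)$ cannot exclude these differentials: the map has bidegree $(0,-2)$, so the top two generators of $T_n$ (those at homological degrees $2n+2$ and $2n+3$) lie in $\ker\Psi$ for dimension reasons, and a differential from $t^0q^{2n+4}$ landing there is invisible under $\Psi$.  Your appeal to ``exactness of the skein triangle at the Khovanov page together with the $\delta$-grading constraint'' does not help, since exactness at $E_2$ says nothing about $d_r$ for $r\ge 4$.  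The paper closes this gap by an entirely different mechanism that you do not mention: $P(-2,3,2n+1)$ is a positive knot, so Plamenevskaya's transverse element equals $t^0q^{2n+4}$, and Baldwin proved that this class is a cycle on every page of the Ozsv\'ath--Szab\'o spectral sequence.  Without this (or an equivalent external input) your induction does not close.
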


\begin{proof}
Firstly we want to see that the element $t^0q^{2n+4}$ in $\Khr(P(-2,3,2n+1))$ has to survive the spectral sequence.  To see this we just observe that Baldwin's argument for the case $n=2$ actually works for $n \geq 2$.  Essentially since $P(-2,3,2n+1)$ is a positive knot and the Khovanov homology is of rank $1$ in homological degree $0$, it follows that Plamenevskaya's element \cite{Plamenevskaya} is exactly the element $t^0q^{2n+4}$.  Baldwin shows that Plamenevskaya's element represents a cycle in every page of the spectral sequence and, since the Ozsv\'ath-Szab\'o differentials always increase the homological grading, this implies that $t^0q^{2n+4}$ survives to the $E_\infty$ page.

Next we note that there is a orientable knot cobordism induced by the addition of $2n-4$ $1$-handles from $P(-2,3,2n+1)$ to $P(-2,3,5)$.  Now this induces a map on Khovanov homologies

\[ \phi: \Khr(P(-2,3,2n+1)) \rightarrow \Khr(P(-2,3,5)) {\rm ,} \]

\noindent such that

\begin{itemize}
\item $\phi$ is of bidegree $(0,4-2n)$,
\item $\phi$ is the map on the $E^2$-pages of a morphism between the Ozsv\'ath-Szab\'o spectral sequences of the two knots,
\item $\phi$ is onto.
\end{itemize}

\noindent The last bullet point follows from the unoriented skein exact sequence in Khovanov homology and a comparison of ranks.

As an example, below we have drawn the Khovanov homology of $P(-2,3,11)$.  The discs correspond to generators whose image under $\phi$ is non-zero, the circles are generators in the kernel of $\phi$.  The arrows are the higher differentials of the spectral sequence which we are trying to prove exist.

\begin{center}
\begin{sseq}[grid=go, entrysize=3mm]{0...14}{14...38}
\ssmoveto{0}{14}
\ssdropbull
\ssmoveto{2}{18}
\ssdropbull
\ssmoveto{3}{20}
\ssdropbull
\ssmoveto{4}{20}
\ssdropbull
\ssmoveto{5}{24}
\ssdropbull
\ssmoveto{6}{24}
\ssdropbull
\ssmoveto{7}{26}
\ssdropbull
\ssmoveto{8}{28}
\ssdrop{\circle}
\ssmoveto{9}{30}
\ssdrop{\circle}
\ssmoveto{10}{32}
\ssdrop{\circle}
\ssmoveto{11}{34}
\ssdrop{\circle}
\ssmoveto{12}{36}
\ssdrop{\circle}
\ssmoveto{13}{38}
\ssdrop{\circle}
\ssmoveto{2}{18}
\ssarrow[curve=0]{2}{2}
\ssmoveto{5}{24}
\ssarrow[curve=0]{2}{2}
\ssmoveto{3}{20}
\ssarrow[curve=0]{3}{4}
\end{sseq}
\end{center}

Now each differential on the $n$th page $E_n$ in the Ozsv\'ath-Szab\'o spectral sequence raises the homological grading by $n$.  We know from the previous section that each differential in the spectral sequence for a pretzel knot has to lower the delta grading by $1$.  Hence each differential on the $n$th page is of bidegree $(n,2(n-1))$.

Let us now look at the map between the $E_2$ pages, we have the following commutative diagram:

\begin{equation*} \begin{CD}
t^2 q^{2n+8} @>d_2>> t^4 q^{2n + 10} \\
@VV\phi V   @VV\phi V \\
t^2 q^{12} @>D_2>> t^4 q^{14}
\end{CD} \end{equation*}

\noindent where the bottom row is part of the $E_2$ page for $P(-2,3,5)$ and the top row is part of the $E_2$ page for $P(-2,3,2n+1)$.  The differential $d_2$ is forced to be non-zero since all other arrows are non-zero.  Hence $(t^2 q^{2n+8} , t^4 q^{2n + 10})$ is a canceling pair on the $E_2$ page for $P(-2,3,2n+1)$.  A similar argument tells us that $(t^5 q^{2n + 14},t^7 q^{2n + 16})$ is another canceling pair on the $E_2$ page.

Now we look at the $E_3$ page.  Again the bottom row is $P(-2,3,5)$, the top row is $P(-2,3,2n+1)$.

\[ \begin{CD}
[t^3 q^{2n+10}]_3 @>d_3>> [t^6 q^{2n + 14}]_3 \\
@VV[\phi]_3 V   @VV[\phi]_3 V \\
t^3 q^{14} @>D_3>> t^6 q^{18}
\end{CD}\]

The bottom row is just the differential that we know exists on the $E_3$ page for $P(-2,3,5)$.  The arrows labelled $[\phi]_3$ are components of the map induced by the map $\phi$ between the two $E_2$ pages.  The terms labelled $[t^3 q^{2n+10}]_3$ and $[t^6 q^{2n + 14}]_3$ are the images in the $E_3$ page of two generators of the $E_2$ page and $d_3$ is a potentially non-zero differential between them.  In fact it is clear from the commutativity of the diagram that $d_3 \not= 0$ so long as both $[t^3 q^{2n+10}]_3 \not= 0$ and $[t^6 q^{2n + 14}]_3 \not= 0$.  And this is certainly true since there are no generators of the $E_2$ page of the spectral sequence for $P(-2,3,2n+1)$ with the correct bidigrees to cancel with these generators at that page.  Hence $d_3 \not= 0$ and $(t^3 q^{2n+10}, t^6 q^{2n + 14})$ is a canceling pair at the $E_3$ page.

We note that there is no homogenous generator in the tail $T_n$ with the correct bidegree to cancel before the $E_4$ page.

It remains to see that this is where the spectral sequence for $P(-2,3,2n+1)$ ends: $E_4 = E_\infty$.  We are left at the $E_4$ page with

\[ E_4 = t^0q^{2n+4} \oplus T_n {\rm .} \]

\noindent There can be no canceling pair entirely within $T_n$ since $T_n$ is supported in a single delta grading.  Furthermore we already know that $t^0q^{2n+4}$ survives the spectral sequence.
\end{proof}

\subsection{The $(-3,5,7)$ pretzel knot}

The pretzel knot $P(-3,5,7)$ has trivial Alexander polynomial $\Delta(P(-3,5,7)) = 1$.  The rank of the reduced Khovanov homology $\Khr(P(-3,5,7))$ is $15$, hence \emph{a priori} the rank of $I^\natural(P(-3,5,7))$ is some odd integer between $1$ and $15$.  Since $I^\natural$ detects the unknot we can immediately do a little better and exclude the possibility that $I^\natural(P(-3,5,7))$ has rank $1$!

It is not too hard in fact to see that the rank of $I^\natural(P(-3,5,7))$ is at least $11$, simply by using the long exact sequence

\begin{equation*}
\begin{tikzcd}
 I^\natural(P(-3,6,7)) \arrow{rr} & & I^\natural(P(-3,5,7)) \arrow{ddl} \\ & & \\
	& I^\natural(T(2,4))  . \arrow{uul}
\end{tikzcd}
\end{equation*}

\noindent (where we write $T(2,4)$ for the $(2,4)$ torus link), and computing that the the rank of $I^\natural(P(-3,6,7))$ has to be at least $15$ since that is the sum of the absolute values of the coefficients of its Alexander polynomial.

In this subsection we consider the problem of attempting to restrict the possible differentials of the Kronheimer-Mrowka spectral sequence of $P(-3,5,7)$ in order to deduce more about the filtrations on $I^\natural(P(-3,5,7))$.  This is to illustrate that the techniques of this paper can give more information on the Kronheimer-Mrowka spectral sequence of a pretzel knot than just that they decrease the $\delta$-grading.

First we consider the unoriented skein long exact triangle in Khovanov homology induced by taking resolutions of a crossing in the second of the three twisted regions.

This induces a long exact triangle of the following form:

\begin{equation*}
\begin{tikzcd}
 \Khr(P(-3,5,7)) \arrow{rr} & & \Khr(P(-3,4,7)) \arrow{ddl} \\ & & \\
	& \Khr(T(2,4)) \arrow{uul}
\end{tikzcd}
\end{equation*}

\noindent We compute the ranks $\vert \Khr(P(-3,4,7)) \vert = 11$ and $\vert \Khr(T(2,4)) \vert = 4$ and the sums of absolute values of coefficients of the Alexander polynomial $\vert \Delta(P(-3,4,7)) \vert = 11$.  Hence we conclude that the spectral sequence for $P(-3,4,7)$ is trivial and moreover that the map $\Khr(P(-3,5,7)) \rightarrow \Khr(P(-3,4,7))$ is of rank $11$.  By considering the bidegree of this map we can write down the bigradings of $11$ linearly independent bigraded generators of $\Khr(P(-3,5,7))$ which are mapped to non-zero elements of $\Khr(P(-3,4,7))$.

Below we have drawn the bigrading of $\Khr(P(-3,5,7))$ ($i$ along the horizontal axis and $j-i$ in the vertical direction) and indicated by solid discs these $11$ generators.  Since we know by Proposition \ref{exact triangle khovanov page} that this map $\Khr(P(-3,5,7)) \rightarrow \Khr(P(-3,4,7))$ can be realized as the induced map at the Khovanov page of a morphism between the two Kronheimer-Mrowka spectral sequences, we can apply Lemma \ref{algebraic lemma}.  Since the spectral sequence for $P(-3,4,7)$ is trivial, none of these elements represented by solid discs can be the target of differentials in the spectral sequence for $P(-3,5,7)$.  We have drawn circles to indicate the $4$ remaining bigraded generators of $\Khr(P(-3,5,7))$ which may be targets of differentials in the spectral sequence.

\begin{center}
\begin{sseq}[grid=go, entrysize=4mm]{-3...9}{5...15}
\ssmoveto{-3}{5}
\ssdropbull
\ssmoveto{-2}{6}
\ssdropbull
\ssmoveto{-1}{7}
\ssdropbull
\ssmoveto{0}{8}
\ssdropbull
\ssdropbull
\ssmoveto{1}{9}
\ssdropbull
\ssmoveto{2}{10}
\ssdropbull
\ssmoveto{3}{11}
\ssdropbull
\ssmoveto{4}{10}
\ssdropbull
\ssmoveto{5}{11}
\ssdrop{\circle}
\ssmoveto{6}{12}
\ssdropbull
\ssmoveto{7}{13}
\ssdropbull
\ssdrop{\circle}
\ssmoveto{8}{14}
\ssdrop{\circle}
\ssmoveto{9}{15}
\ssdrop{\circle}
\end{sseq}
\end{center}

Next we consider the long exact sequence in Khovanov homology obtained by resolving a crossing in the first of the three twisted regions of $P(-3,5,7)$.  This gives a long exact sequence of the form:

\begin{equation*}
\begin{tikzcd}
 \Khr(P(-3,5,7)) \arrow{rr} & & \Khr(T(2,12)) \arrow{ddl} \\ & & \\
	& \Khr(P(-2,5,7)) \arrow{uul}
\end{tikzcd}
\end{equation*}

We compute ranks $\vert \Khr(P(-2,5,7)) \vert = 19$ and $\vert \Khr(T(2,12)) \vert = 12$ and the sum of absolute values of the coefficients $\vert \Delta (P(-2,5,7))\vert = 19$.  Hence we can conclude that the spectral sequence for $P(-2,5,7)$ is trivial and that the rank of the map $\Khr(P(-2,5,7)) \rightarrow \Khr(P(-3,5,7))$ is $11$.

By considering the bigraded degree of this map we can give the bigradings of a bigraded basis for its image, none of whose elements can be sources of non-trivial differentials in the Kronheimer-Mrowka spectral sequence for $P(-3,5,7)$ (again by Proposition \ref{exact triangle khovanov page} and Lemma \ref{algebraic lemma}).  In the diagram below we have indicated by circles the bigradings of the remaining $4$ bigraded generators of $\Khr(P(-3,5,7))$ which may be the source of non-trivial differentials in the spectral sequence.

\begin{center}
\begin{sseq}[grid=go, entrysize=4mm]{-3...9}{5...15}
\ssmoveto{-3}{5}
\ssdrop{\circle}
\ssmoveto{-2}{6}
\ssdropbull
\ssmoveto{-1}{7}
\ssdrop{\circle}
\ssmoveto{0}{8}
\ssdrop{\circle}
\ssdropbull
\ssmoveto{1}{9}
\ssdropbull
\ssmoveto{2}{10}
\ssdrop{\circle}
\ssmoveto{3}{11}
\ssdropbull
\ssmoveto{4}{10}
\ssdropbull
\ssmoveto{5}{11}
\ssdropbull
\ssmoveto{6}{12}
\ssdropbull
\ssmoveto{7}{13}
\ssdropbull
\ssdropbull
\ssmoveto{8}{14}
\ssdropbull
\ssmoveto{9}{15}
\ssdropbull
\end{sseq}
\end{center}

We now consider the $\Z/4$-grading which is just the reduction modulo $4$ of the grading $j-i$.  Any non-trivial differential in the spectral sequence changes this grading by $3$ modulo $4$.  We can conclude that there are at most $4$ possibilities for differentials in the spectral sequence, at most two of which can actually occur.  We have drawn these four possibilities in the diagram below.

\begin{center}
\begin{sseq}[grid=go, entrysize=5mm]{-3...9}{5...15}
\ssmoveto{-3}{5}
\ssdropbull
\ssmoveto{-2}{6}
\ssdropbull
\ssmoveto{-1}{7}
\ssdropbull \ssname{f}
\ssmoveto{0}{8}
\ssdropbull \ssname{a}
\ssdropbull
\ssmoveto{1}{9}
\ssdropbull
\ssmoveto{2}{10}
\ssdropbull \ssname{c}
\ssmoveto{3}{11}
\ssdropbull
\ssmoveto{4}{10}
\ssdropbull
\ssmoveto{5}{11}
\ssdropbull \ssname{e}
\ssmoveto{6}{12}
\ssdropbull
\ssmoveto{7}{13}
\ssdropbull \ssname{b}
\ssdropbull
\ssmoveto{8}{14}
\ssdropbull \ssname{g}
\ssmoveto{9}{15}
\ssdropbull \ssname{d}
\ssmoveto{-1}{7}
\ssgoto f \ssgoto g \ssstroke[curve=0]
\ssgoto a \ssgoto e \ssstroke[curve=0]
\ssgoto a \ssgoto d \ssstroke[curve=0.3]
\ssgoto c \ssgoto b \ssstroke[curve=0]
\end{sseq}
\end{center}

We observe that there are $8$ generators which certainly survive the spectral sequence and whose bigradings we know explicitly.

\subsection{Relation to representation spaces}
Given a knot $K$ and a meridian $m$ of $K$, one may define the space of representations
\[
	R(K;{\bf i}) = \{ \rho  \in \, \Hom(\pi_1(S^3 \setminus K), SU(2)) \, | \,\tr(\rho(m)) = 0 \} \ .
\]
Here we also denote by $m$ the class of a meridian in $\pi_1(S^3 \setminus K)$, well defined up to conjugacy, and a representation is required to send this element to the conjugacy class of traceless matrices in $SU(2)$.

Reduced instanton knot Floer homology $I^\natural(K)$ of a knot $K$ is by definition the homology of a complex $(C(K)^\natural,d^\natural)$. This is, in some sense, the Morse homology of a Chern-Simons functional, suitably perturbed so as to obtain transversality of the involved instanton moduli spaces. The critical space of the {\em unperturbed} functional is related to the space $R(K;{\bf i})$ as follows (see \cite{KM_ss,KM_sutures,HHK}): Each conjugacy class of an irreducible representation in $R(K;{\bf i })$ accounts for a circle, and the conjugacy class of the reducible representation accounts for a point. 

In the most generic situation, $R(K;{\bf i})$ consists of only finitely many conjugacy classes. In this situation, after perturbation of the Chern-Simons functional, each critical circle is expected to yield two critical points. This has been described explicitly by Hedden, Herald, and Kirk in \cite{HHK} in a quite general setting. In this situation, the complex $C(K)^\natural$ is a free $\Q$-vector space of dimension $1+2n$, where $n$ is the number of conjugacy classes of irreducible representations in $R(K;{\bf i})$. The reduced instanton homology $I^\natural(K)$ is then bounded above by $1+2n$ as well.
\\

It is an interesting fact that the upper bound from Khovanov homology seems to be better than the upper bound from the representation space for pretzel knots, whereas for torus knots the converse seems to be the case in general (except for the torus knots $T(3,n)$). We list a few cases explicitly. The claims on the representation spaces of pretzel knots can be found in \cite{FKPC} and \cite{Z}. \\

{\em 
	\begin{itemize}
	\item For the pretzel knot $P(-3,5,7)$ we have $\rk(\Khr(P(-3,5,7))) = 15$, whereas $R(P(-3,5,7);{\bf i})$ contains the conjugacy class of the reducible and 16 conjugacy classes of irreducible non binary dihedral representations (see the table of the example in \cite{Z} where 3 errors occur that yield a total error of 1 which multiplied by two gave the wrong claim of 18 conjugacy classes). \\
	\item For the pretzel knots $P(-2,3,2n+1)$ we have $\rk(\Khr(P(-2,3,2n+1))) = 2n+3$. The representation space $R(P(-2,3,2n+1);{\bf i})$ contains the conjugacy class of the reducible representation, $2n-6$ irreducible binary dihedral representations, and $\lfloor \frac{8}{3} n \rfloor$ conjugacy classes of irreducible non binary dihedral representations, therefore yielding an upper bound to $I^\natural(P(-2,3,2n+1))$ by $\lfloor (4 + \frac{2}{3}) n -5 \rfloor$.\\ 
	\item Torus knots $T(p,q)$ with $p,q \geq 4$ seem to have a faster growth in reduced Khovanov homology than in the bound coming from representation spaces, see \cite[Section 12.5]{HHK}.
	\end{itemize}	
}

\end{document}